\numberwithin{equation}{section}
\newtheorem{thm}{THEOREM}[section]
\newtheorem{conj}[thm]{CONJECTURE}
\newtheorem{defn}[thm]{DEFINITION}
\newtheorem{lemma}[thm]{LEMMA}
\begin{document}
\title{Cohomology of $\mathcal A_\theta^{alg} \rtimes \mathbb Z_2$ and its Chern-Connes Pairing}
\author{Safdar Quddus}

\date{\today}
 
\let\thefootnote\relax\footnote{2010 Mathematics Subject Classification. 58B34; 18G60}
\keywords{cohomology, noncommutative torus, chern-connes pairing}

\maketitle

\begin{abstract}
We calculate the Hochschild and cyclic cohomology of the noncommutative $\mathbb Z_2$ toroidal algebraic orbifold $\mathcal A_\theta^{alg} \rtimes \mathbb Z_2$. We also calculate the  Chern-Connes pairing  of the even periodic cyclic cocycles with the known elements of $K_0(\mathcal A_\theta^{alg} \rtimes \mathbb Z_2)$.
\end{abstract}

\section{Introduction and Statement}
In the classic paper \cite{C}, Connes constructed a noncommutative analogue of the Chern map of differential geometry. He considered the map from the $K_0$ group of a noncommutative algebra to its even cyclic homology and paired a projection with the cyclic cocycle to give a numerical invariant. \par
Let $\mathcal S(\mathbb Z^2)$ be the Schwartz space on $\mathbb Z^2$, consisting of all complex sequences $a_{n,m}$ satisfying:
$$\displaystyle \sup_{(n,m) \in \mathbb Z^2}(|n| + |m|)^q |a_{n,m}| < \infty, \text{  } \text{   for all } q \in \mathbb N .$$
For given $\theta \in \mathbb R$, we associate the algebra $\mathcal A_\theta$ defined below.  
$$\mathcal A_\theta =\left\{a=\displaystyle\sum_{(n,m)\in \mathbb Z^2} a_{n,m} U_1^n U_2^m \mid a_{n,m} \in \mathcal S(\mathbb Z^2) \right\},$$
where $U_1$ and $U_2$ are unitary generators satisfying $U_2 U_1 = \lambda U_1 U_2$, $\lambda =e^{2 \pi i \theta}$.
Connes \cite{C} computed the cyclic cohomology and Chern-Connes index for the smooth algebra $\mathcal A_\theta$.  The group $SL(2,\mathbb Z)$ has the following action on $\mathcal A_\theta$. An element
$$g= \left[
 \begin{array}{cc}
   g_{1,1} & g_{1,2} \\
   g_{2,1} & g_{2,2}
 \end{array} \right]\in SL(2,\mathbb Z)$$
acts on the generators $U_1$ and $U_2$ as described below:
$$g \cdot U_1=e^{(\pi i g_{1,1} g_{2,1})\theta}U_1^{g_{1,1}}U_2^{g_{2,1}} \text{ and } g \cdot U_2=e^{(\pi i g_{1,2} g_{2,2})\theta}U_1^{g_{1,2}}U_2^{g_{2,2}}.$$
 Let $\mathcal A_\theta^{alg}$ consists  of all finitely supported elements of $\mathcal A_\theta$. We shall study the crossed product of the subalgebra $\mathcal A_\theta^{alg}$ with the group $\mathbb Z_2$ identified as a subgroup of $SL(2,\mathbb Z)$.

Berest et al. \cite{BRT} calculated the Picard group of $\mathcal A_\theta^{alg}$. Some of the Hochschild homology groups of $\mathcal A_\theta^{alg} \rtimes \mathbb Z_k$ for $k=2,3,4$ and $6$ were known for many years (\cite{O} and \cite{B}). All the Hochschild and cylic homology groups of the orbifolds $\mathcal A_\theta^{alg} \rtimes \mathbb Z_k$, for $k=2,3,4$ and $6$, were recently calculated \cite{Q}. Further the Hochschild homology of the Weyl algebra was studied by Alev and Lambre in \cite{AL}. In this article we shall compute the Hochschild and cyclic cohomology of $\mathcal A_\theta^{alg} \rtimes \mathbb Z_2$,  the $\mathbb Z_2$ noncommutative algebraic toroidal orbifold. We also compute the Chern-Connes index for this orbifold by pairing these cocycles with the algebraic projections of the group $K_0(\mathcal A_\theta \rtimes \mathbb Z_2)$, which was calculated in \cite{ELPH}. In this article we adopt the notation from \cite{C} and \cite{Q} and prove the following results.

\begin{thm} \label{thm:tell} If $\theta \notin \mathbb Q$, then the Hochschild cohomology groups of $\mathcal A_\theta^{alg} \rtimes \mathbb Z_2$ are:
\begin{enumerate}
\item[ ] $H^0(\mathcal A_{\theta}^{alg} \rtimes \mathbb Z_2,(\mathcal A_{\theta}^{alg} \rtimes \mathbb Z_2)^\ast ) \cong \mathbb C^{5}$,
\item[ ] $H^1(\mathcal A_{\theta}^{alg} \rtimes \mathbb Z_2,(\mathcal A_{\theta}^{alg} \rtimes \mathbb Z_2)^\ast) = 0$, and
\item[ ] $H^2(\mathcal A_{\theta}^{alg} \rtimes \mathbb Z_2,(\mathcal A_{\theta}^{alg} \rtimes \mathbb Z_2)^\ast)  \cong \mathbb C$.
\end{enumerate}
\end{thm}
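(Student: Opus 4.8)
The plan is to reduce everything to the Hochschild theory of the underlying noncommutative torus $\mathcal A_\theta^{alg}$ equipped with the flip automorphism $\sigma\colon U\mapsto U^{-1},\,V\mapsto V^{-1}$. Since $2$ is invertible in $\mathbb C$, the inclusion $\mathcal A_\theta^{alg}\hookrightarrow \mathcal A_\theta^{alg}\rtimes\mathbb Z_2$ is a separable extension, and the standard averaging argument gives $H^{\ast}(\mathcal A_\theta^{alg}\rtimes\mathbb Z_2,M)\cong H^{\ast}(\mathcal A_\theta^{alg},M)^{\mathbb Z_2}$ for any $\mathcal A_\theta^{alg}\rtimes\mathbb Z_2$-bimodule $M$. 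Applying this with $M=(\mathcal A_\theta^{alg}\rtimes\mathbb Z_2)^{\ast}$, I would use the bimodule splitting $\mathcal A_\theta^{alg}\rtimes\mathbb Z_2=\bigoplus_{g\in\mathbb Z_2}\mathcal A_\theta^{alg}\,g$, in which $\mathcal A_\theta^{alg}\,g$ is $\mathcal A_\theta^{alg}$ with right action twisted by $\sigma_g$, together with the duality $H^{n}(A,N^{\ast})\cong H_{n}(A,N)^{\ast}$ (linear dual), to obtain
\[
H^{\ast}\big(\mathcal A_\theta^{alg}\rtimes\mathbb Z_2,(\mathcal A_\theta^{alg}\rtimes\mathbb Z_2)^{\ast}\big)
\cong
\big(H_{\ast}(\mathcal A_\theta^{alg})^{\ast}\big)^{\mathbb Z_2}
\ \oplus\
\big(H_{\ast}(\mathcal A_\theta^{alg},(\mathcal A_\theta^{alg})_\sigma)^{\ast}\big)^{\mathbb Z_2}.
\]
This splits the problem into an \emph{untwisted} sector (the $g=1$ summand) and a \emph{twisted} sector (the $g=\sigma$ summand), and because $\mathbb Z_2$ is abelian the $\mathbb Z_2$-action preserves each summand rather than permuting them.

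For the untwisted sector I would recall that $H_0,H_1,H_2$ of $\mathcal A_\theta^{alg}$ are $\mathbb C$, $\mathbb C^2$, $\mathbb C$, generated by the class of $1$, the two classes dual to the canonical derivations $\delta_1,\delta_2$, and the fundamental class. The flip fixes $1$, sends $\delta_i\mapsto-\delta_i$ and hence acts by $-1$ on $H_1$, and, since it reverses both derivations, acts by $(-1)^2=+1$ on the fundamental class. Taking $\mathbb Z_2$-invariants therefore contributes $\mathbb C$, $0$, $\mathbb C$ in degrees $0,1,2$.

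The heart of the argument, and the step I expect to be the main obstacle, is the twisted sector. Here I would take the length-two Koszul bimodule resolution of $\mathcal A_\theta^{alg}$ and tensor it with the twisted coefficient bimodule $(\mathcal A_\theta^{alg})_\sigma$, yielding a complex $0\to A\xrightarrow{\partial_2}A^{2}\xrightarrow{\partial_1}A\to 0$ whose differentials are the $\sigma$-twisted commutators $a\mapsto Ua-a\sigma(U)=Ua-aU^{-1}$ and its $V$-analogue. On the Fourier basis $U^{m}V^{n}$ these become difference operators of the shape ``shift by $+1$ minus a phase times shift by $-1$.'' The one-variable model over $\mathbb C[z,z^{-1}]$, where $z^{k}\mapsto z^{k+1}-z^{k-1}$ has trivial kernel and a two-dimensional cokernel (even and odd modes, corresponding to the fixed points $z=\pm1$), is the guide: carrying this out in both variables and tracking the phases $e^{2\pi i\theta}$, I expect the twisted homology to be concentrated in degree $0$ of dimension $4$, matching the four fixed points $(0,0),(\tfrac12,0),(0,\tfrac12),(\tfrac12,\tfrac12)$ of the flip, and to vanish in degrees $1,2$. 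The delicate points will be using the irrationality of $\theta$ to force the kernels in degrees $1$ and $2$ to vanish despite the phase twists, and verifying that $\mathbb Z_2$ acts trivially on the four-dimensional degree-zero space so that invariants leave it intact. Assembling the two sectors then gives $H^{0}\cong\mathbb C\oplus\mathbb C^{4}\cong\mathbb C^{5}$, $H^{1}\cong 0$, and $H^{2}\cong\mathbb C$, as claimed.
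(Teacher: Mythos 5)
Your proposal is correct in outline and, strategically, follows the same route as the paper: the same sector decomposition $H^\bullet(\mathcal A_\theta^{alg}\rtimes\mathbb Z_2,(\mathcal A_\theta^{alg}\rtimes\mathbb Z_2)^\ast)\cong H^\bullet(\mathcal A_\theta^{alg},\mathcal A_\theta^{alg\ast})^{\mathbb Z_2}\oplus H^\bullet(\mathcal A_\theta^{alg},{}_{-1}\mathcal A_\theta^{alg\ast})^{\mathbb Z_2}$ (the paper quotes it from [Q] rather than deriving it by averaging), the same Koszul resolution, and the same mode-by-mode difference-equation analysis; your untwisted-sector conclusions ($\mathbb C,\mathbb C^2,\mathbb C$ with the flip acting by $+1,-1,+1$) are exactly the paper's Lemmas 3.1--3.3. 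The genuine difference is that you compute homology with finitely supported (polynomial) coefficients and then dualize, whereas the paper works directly in cohomology with coefficients in the full dual, i.e.\ in unrestricted formal series; this is not cosmetic, because kernels and cokernels look very different on the two spaces. In the twisted sector your prediction --- concentration in degree $0$ with dimension $4$ and trivial $\mathbb Z_2$-action, and vanishing in degrees $1$ and $2$ \emph{before} taking invariants --- is the one forced by the duality $H^n(A,N^\ast)\cong H_n(A,N)^\ast$ that you invoke: the twisted degree-$2$ kernel dies because the two twisted-commutator recursions have nonzero multipliers and so force infinite support (note this needs only $\lambda\neq 0$, not irrationality). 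The paper instead asserts that the twisted $H^2$ is $\mathbb C^4$ before invariants and then argues that no nonzero class is $\mathbb Z_2$-invariant; but on formal-series coefficients its four claimed generators are already coboundaries by infinite telescoping, e.g.\ ${}_{-1}\alpha_2\bigl(\sum_{k\geq 0}\lambda^k U_2^{2k+1},0\bigr)=1$ with the paper's own formula for ${}_{-1}\alpha_2$, so your version of the intermediate statement is the internally consistent one; both versions land on the same final answer. Two caveats on your plan. First, the real work --- which occupies most of the paper (its Lemmas 2.4--2.6 and 3.3) --- is the degree-$1$ vanishing in both sectors; in your homological picture this means showing the middle homology of the difference-operator complex vanishes, and you have flagged this but not carried it out. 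Second, irrationality of $\theta$ is what you need in the \emph{untwisted} sector (to invert $1-\lambda^n$ and get $HH_\bullet(\mathcal A_\theta^{alg})=\mathbb C,\mathbb C^2,\mathbb C$), not in the twisted sector where you place it.
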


\begin{thm} \label{thm:cneat}
$HP^{even}(\mathcal A_\theta^{alg} \rtimes \mathbb Z_2) \cong \mathbb C^6$ and $HP^{odd}(\mathcal A_\theta^{alg} \rtimes \mathbb Z_2) = 0$.
\end{thm}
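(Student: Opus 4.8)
The plan is to read off the cyclic cohomology from the Hochschild cohomology of Theorem 1.1 using Connes' periodicity (SBI) long exact sequence. Writing $\mathcal{A} = \mathcal{A}_\theta^{alg}\rtimes\mathbb{Z}_2$, I will feed the Hochschild data into
$$\cdots \to HC^{n-2}(\mathcal{A}) \xrightarrow{S} HC^n(\mathcal{A}) \xrightarrow{I} HH^n(\mathcal{A}) \xrightarrow{B} HC^{n-1}(\mathcal{A}) \xrightarrow{S} HC^{n+1}(\mathcal{A}) \to \cdots.$$
The only ingredient beyond Theorem 1.1 that I need is the vanishing $HH^n(\mathcal{A}) = 0$ for $n \ge 3$. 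I would justify this by noting that $\mathcal{A}_\theta^{alg}$ admits a length-two projective bimodule (Koszul-type) resolution and hence has Hochschild dimension $2$; since $|\mathbb{Z}_2|=2$ is invertible in $\mathbb{C}$, the Hochschild cohomology of the crossed product is the $\mathbb{Z}_2$-invariant part of $\bigoplus_{g}\mathrm{Ext}^\bullet_{(\mathcal{A}_\theta^{alg})^e}(\mathcal{A}_\theta^{alg},(\mathcal{A}_\theta^{alg})_g)$, where $(\mathcal{A}_\theta^{alg})_g$ is $\mathcal{A}_\theta^{alg}$ twisted into a bimodule by $g$, and each summand vanishes above degree $2$. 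Thus $\mathcal{A}$ again has Hochschild dimension $2$, a fact already implicit in the resolution behind Theorem 1.1.

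Granting this, the computation becomes pure bookkeeping in the exact sequence. In degree $0$ nothing lies below, so $I$ makes $HC^0(\mathcal{A}) \cong HH^0(\mathcal{A}) \cong \mathbb{C}^5$. Since $HH^1(\mathcal{A}) = 0$, injectivity of $I$ in degree $1$ forces $HC^1(\mathcal{A}) = 0$. In degree $2$ the vanishing of $HH^1$ (and hence of $HC^1$) collapses the sequence to
$$0 \to HC^0(\mathcal{A}) \xrightarrow{S} HC^2(\mathcal{A}) \xrightarrow{I} HH^2(\mathcal{A}) \to 0,$$
so that $\dim HC^2(\mathcal{A}) = \dim HC^0(\mathcal{A}) + \dim HH^2(\mathcal{A}) = 5+1 = 6$, giving $HC^2(\mathcal{A}) \cong \mathbb{C}^6$.

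To reach the $\mathbb{Z}/2$-graded groups I would then check that $S$ stabilizes: for each even $n \ge 4$ the vanishing of $HH^{n-1}$ and $HH^n$ makes $S\colon HC^{n-2}(\mathcal{A}) \to HC^n(\mathcal{A})$ an isomorphism, so that $HC^{2k}(\mathcal{A}) \cong \mathbb{C}^6$ for all $k \ge 1$, while the same vanishing gives $HC^{2k+1}(\mathcal{A}) = 0$ for all $k \ge 0$. Passing to the direct limit along $S$ yields $HP^0(\mathcal{A}) \cong \mathbb{C}^6$ and $HP^1(\mathcal{A}) = 0$, which are exactly $HC^{even}(\mathcal{A})$ and $HC^{odd}(\mathcal{A})$ in the statement.

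The only genuinely substantive step is the high-degree vanishing $HH^{\ge 3}(\mathcal{A}) = 0$; this is where I expect the real work to lie, since it requires knowing that passing to the $\mathbb{Z}_2$-orbifold does not raise the Hochschild dimension of the noncommutative torus above $2$. Once that is secured, every cyclic cohomology group above is forced by exactness together with the Hochschild values of Theorem 1.1, with no need for any explicit formula for $S$, $B$, or $I$.
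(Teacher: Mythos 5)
Your proposal is correct and follows essentially the same route as the paper: Connes' $SBI$ long exact sequence fed with the Hochschild groups $\mathbb{C}^5, 0, \mathbb{C}$, together with the vanishing of Hochschild cohomology in degrees $\geq 3$ (coming from the length-two Koszul resolution of $\mathcal A_\theta^{alg}$ and the crossed-product decomposition), which forces $HC^2 \cong \mathbb{C}^6$, $HC^{odd}=0$, and stabilization of $S$ from degree $2$ onward. The only cosmetic difference is that the paper runs the $SBI$ sequence component-wise on the twisted and untwisted invariant summands (getting $\mathbb{C}^4$ and $\mathbb{C}^2$) before adding, whereas you run it once on the total crossed product; the two computations are equivalent.
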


\begin{thm} \label{thm:treat}
The following is the description of the Chern-Connes pairing of the six dimensional group $HP^{even}(\mathcal A_\theta^{alg} \rtimes \mathbb Z_2)$ generated by cocycles  S$\tau$, $S\mathcal D_{1,1}$, $S\mathcal D_{0,0}$, $S\mathcal D_{0,1}$, $S\mathcal D_{1,0}$ and $\varphi$, with the five known independent projections of $\mathcal A_\theta^{alg} \rtimes \mathbb Z_2$ namely 1, $p^\theta$, $q_1^\theta$, $q_2^\theta$ and $r^\theta$ \cite{ELPH}
\begin{center}
  \begin{tabular}{c || c | c | c | c | c | c }
     &  S$\tau$ & $S\mathcal D_{1,1}$ & $S\mathcal D_{0,0}$ & $S\mathcal D_{0,1}$ & $S\mathcal D_{1,0}$ & $S\varphi$  \\ \hline \hline
    1 & $1$ & $0$ & $0$ & $0$ & $0$ & $0$\\ \hline
    $p^\theta$ & $\frac{1}{2}$ & $\frac{1}{2}$ & $0$ & $0$ & $0$ & $0$\\ \hline
    $q_1^\theta$ & $\frac{1}{2}$ & $0$ & $0$ & -$\frac{1}{2}$ & $0$ & $0$\\ \hline
   $q_2^\theta$ & $\frac{1}{2}$ & $0$ & -$\frac{1}{2}$ & $0$ & $0$ & $0$\\ \hline
    $r^\theta$ & $\frac{1}{2}$ & $0$ & $0$ & $0$ & -$\frac{\lambda}{2}$ & $0$\\
    \hline
  \end{tabular}.
\end{center}
\end{thm}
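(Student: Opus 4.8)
The plan is to reduce the whole table to the evaluation of traces, using the fundamental compatibility of the Chern--Connes pairing with the periodicity operator $S$: for any even cyclic cocycle $\phi$ and any class $[e]\in K_0$ one has $\langle S\phi,[e]\rangle=\langle\phi,[e]\rangle$, since both sides depend only on the image of $[e]$ under the Chern character in periodic cyclic homology, on which $S$ acts as the identity. Hence the columns $S\tau$ and $S\mathcal D_{i,j}$ are computed from the degree-zero classes $\tau,\mathcal D_{i,j}$ produced in the proof of Theorem 1.2, and pairing a degree-zero cyclic cocycle (a trace) with a projection returns simply the value of that trace on the projection. Writing a general element of $\mathcal A_\theta^{alg}\rtimes\mathbb Z_2$ as $a_0+a_1\sigma$, these five traces are the canonical trace $\tau(a_0+a_1\sigma)=\tau_\theta(a_0)$ together with the four fixed-point traces $\mathcal D_{i,j}$, $(i,j)\in\{0,1\}^2$, which are supported on the $\sigma$-component and evaluate $a_1$ at the fixed point $U,V\in\{\pm1\}$ of the flip $\sigma\colon U\mapsto U^{-1},\,V\mapsto V^{-1}$ indexed by $(i,j)$.

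First I would substitute the five projections of [ELPH] into these traces. The first column is immediate from $\tau_\theta$: the unit gives $1$, while each of $p^\theta,q_1^\theta,q_2^\theta,r^\theta$ has constant identity-component $\tfrac12$ and hence trace $\tfrac12$. For the four middle columns the essential input is that each of $p^\theta,q_1^\theta,q_2^\theta,r^\theta$ is concentrated at a single fixed point: its $\sigma$-component is a normalisation of the idempotent localised at one of the four fixed points, so that its value under $\mathcal D_{i,j}$ is nonzero for exactly one index. This localisation is precisely what produces the diagonal pattern of the table; the recorded sign is the internal sign of the projection, and the factor $\lambda$ in the $r^\theta$ row is the noncommutative phase picked up when the relevant monomial in $U,V$ is reordered before evaluation at its fixed point. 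Verifying the four middle entries in each row therefore amounts to evaluating the explicit $\sigma$-components of [ELPH] at the points $U,V\in\{\pm1\}$.

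The remaining and genuinely harder column is that of the degree-two class $\varphi$, where I must show the pairing vanishes on all five projections. Conceptually this holds because $\varphi$ is the bulk ``area'' cocycle, the $\mathbb Z_2$-invariant descendant of $\varphi(a_0,a_1,a_2)=\tau_\theta\!\bigl(a_0(\delta_1a_1\,\delta_2a_2-\delta_2a_1\,\delta_1a_2)\bigr)$, which detects winding in the smooth sector; the unit is flat and the four remaining projections carry no smooth-sector winding, living instead in the twisted (fixed-point) sectors that $\varphi$ annihilates, whereas the genuine Powers--Rieffel class detected by $\varphi$ is not among the five. To make this rigorous I would use the degree-two Chern character term $\langle\varphi,[e]\rangle=c\,\varphi(e-\tfrac12,e,e)$ and expand $e=a_0+a_1\sigma$ according to the $\mathbb Z_2$-grading: because $\varphi$ is supported on the identity component, and the two toroidal derivations annihilate scalars and are only anti-equivariant for $\sigma$, the surviving terms must cancel in pairs. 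The delicate point, and the main obstacle, is to confirm that the specific invariant representative of $\varphi$ delivered by the computation of Theorem 1.2 is exactly the one for which this cancellation is clean, since $\delta_1,\delta_2$ do not individually descend to the crossed product; once that representative is fixed, the vanishing of the last column follows and completes the table.
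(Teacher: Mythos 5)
Your overall skeleton does coincide with the paper's: reduce the $S\tau$ and $S\mathcal D_{i,j}$ columns via $\langle [e],[S\phi]\rangle=\langle [e],[\phi]\rangle$ to evaluations of degree-zero cocycles (traces) on the projections, get the first column from the canonical trace (every nontrivial projection has untwisted part $\tfrac12$), and treat Connes' two-cocycle separately. The genuine gap is in your identification of the cocycles $\mathcal D_{i,j}$, and hence in your mechanism for the diagonal shape of the four middle columns. In the paper, $\mathcal D_{i,j}$ is the twisted-sector $0$-cocycle constructed in the computation of $H^0(\mathcal A_\theta^{alg},{}_{-1}\mathcal A_\theta^{alg\ast})^{\mathbb Z_2}$: it is the functional supported on the Fourier modes $U_1^nU_2^m$ with $(n,m)\equiv(i,j)\pmod 2$, with phases $\lambda^{(\cdot)}$ dictated by the twisted-trace recursion $\varphi_{n+1,m}=\lambda^m\varphi_{n-1,m}$. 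Since the $t$-components of the five projections are the single monomials $0,\ \tfrac12,\ -\tfrac12U_1,\ -\tfrac12U_2,\ -\tfrac{\sqrt\lambda}{2}U_1U_2$, each lies in exactly one parity class, and the diagonal table with entries $\tfrac12,-\tfrac12,-\tfrac12,-\tfrac{\sqrt\lambda}{2}$ is read off as a single coefficient (the $-\lambda/2$ printed in the table is $-\sqrt\lambda/2$ in the paper's own Section 5 computation). You instead declare $\mathcal D_{i,j}$ to be evaluation of the $\sigma$-component at the fixed point $(U_1,U_2)=((-1)^i,(-1)^j)$ and assert that each projection is ``localised at a single fixed point.'' Both claims fail: the $\sigma$-components are monomials, i.e.\ characters, which vanish at no point of the torus, so evaluating, say, $-\tfrac12U_1$ at the four fixed points gives $\mp\tfrac12$ at every one of them. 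Taken literally, your recipe fills the whole middle block with nonzero entries $\pm\tfrac12$ and contradicts the very table you are proving. (The fixed-point evaluations are the $(\mathbb Z/2)^2$-Fourier transforms of the paper's $\mathcal D_{i,j}$; they span the same four-dimensional space of twisted traces, but in that basis the theorem's diagonal table is simply false, so the distinction is not cosmetic.)

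On the last column you are more cautious than you need to be: you flag as ``the main obstacle'' the choice of invariant representative for $\varphi$, but no such subtlety arises, and the paper supplies no computation here either. Extend the derivations to the crossed product by $\delta_k(at)=\delta_k(a)t$ and evaluate $\varphi(e,e,e)=\tau\bigl(e(\delta_1e\,\delta_2e-\delta_2e\,\delta_1e)\bigr)$ (and likewise the term with $1$ in the first slot coming from $e-\tfrac12$) on each projection. For $p^\theta=\tfrac12(1+t)$ all $\delta_k p^\theta=0$; for $q^\theta_j=\tfrac12(1-U_jt)$ one of the two derivations kills $q^\theta_j$, so both products vanish; for $r^\theta=\tfrac12(1-\sqrt\lambda\,U_1U_2t)$ one has $\delta_1r^\theta=\delta_2r^\theta$ up to the same scalar, and $(U_1U_2t)(U_1U_2t)=U_1U_2U_1^{-1}U_2^{-1}$ is central, so the commutator $\delta_1r^\theta\,\delta_2r^\theta-\delta_2r^\theta\,\delta_1r^\theta$ vanishes. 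This closes the $\varphi$ column rigorously in a few lines, whereas the middle-column step, as you have set it up, has to be replaced by the coefficient-extraction description of the $\mathcal D_{i,j}$ before the table can be obtained at all.
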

We end the article with a conjecture over the dimension of the unknown group $K_0(\mathcal A_\theta^{alg} \rtimes \mathbb Z_2)$.

\section{Hochschild cohomology of $\mathcal A_\theta^{alg} \rtimes \mathbb Z_2$}
We note that the dual of the algebraic noncommutative torus $\mathcal A_\theta^{alg}$ is 
$$\mathcal A_\theta^{alg \ast} =\left\{a \mid a =\displaystyle\sum_{(n,m)\in \mathbb Z^2} a_{n,m} U_1^n U_2^m\right\},$$
where $U_1$ and $U_2$ are unitaries satisfying $U_2 U_1 = \lambda U_1 U_2$. For $a \in \mathcal A_\theta^{alg}$, let the trace $\tau$ on the algebra $\mathcal A_\theta^{alg}$ be defined as
$$\tau(a) = a_{0,0}.$$
Then an element $a \in \mathcal A_\theta^{alg \ast}$ acts on $b \in \mathcal A_\theta^{alg}$ as $a(b) = \tau(ab)$. Using the results of Getzler and John \cite{GJ}, the cohomology group $H^\bullet(\mathcal A_\theta^{alg} \rtimes \mathbb Z_2, (\mathcal A_\theta^{alg} \rtimes \mathbb Z_2)^\ast$) has the following decomposition:
\begin{center}
 $H^\bullet(\mathcal A_\theta^{alg} \rtimes \mathbb Z_2, (\mathcal A_\theta^{alg} \rtimes \mathbb Z_2)^\ast) =\displaystyle \bigoplus_{g \in \mathbb Z_2} H^{\bullet}(\mathcal A_\theta^{alg}, {}_{g}\mathcal A_\theta^{alg \ast})^{\mathbb Z_2}=  H^\bullet(\mathcal A_\theta^{alg}, \mathcal A_\theta^{alg \ast})^{\mathbb Z_2} \displaystyle \bigoplus H^\bullet(\mathcal A_\theta^{alg}, {}_{-1}\mathcal A_\theta^{alg \ast})^{\mathbb Z_2}$.
\end{center}
 In the above equation, ${}_{-1}\mathcal A_\theta^{alg \ast}$ consists of elements of $\mathcal A_\theta^{alg \ast}$ with the following twisted $\mathcal A_\theta^{alg}$ bimodule structure. For $a \in {}_{-1}\mathcal A_\theta^{alg \ast}$ and $ \alpha \in \mathcal A_\theta^{alg}$,
$$\alpha \cdot a = (-1 \cdot \alpha)a\text{ and } a \cdot \alpha = a\alpha,$$
where $a \alpha$ is the product of $a$ and $\alpha$ in $\mathcal A_\theta^{alg \ast}$.
We recall the modified Connes projective resolution:
$$\mathcal A^{alg}_\theta \xleftarrow{\epsilon} \mathcal B^{alg}_\theta \xleftarrow{b_1} \mathcal B^{alg}_\theta \displaystyle \bigoplus \mathcal B^{alg}_\theta \xleftarrow{b_2} \mathcal B^{alg }_\theta$$
where $$\mathcal B^{alg}_\theta = \mathcal A_\theta^{alg} \otimes (\mathcal A_\theta^{alg})^{op},$$
$$\epsilon(a\otimes b ) = ab,$$ 
$$b_1(1\otimes e_j)= 1\otimes {U_j}- {U_j}\otimes 1,$$ 
$$b_2(1\otimes( e_1 \wedge e_2 ) ) = (U_2\otimes 1 - \lambda \otimes U_2 )\otimes e_1- ( \lambda U_1\otimes 1 - 1\otimes U_1 )\otimes e_2.$$
The above resolution was used in \cite{Q} to calculate the Hochschild and cyclic homology groups of the algebra $\mathcal A_\theta^{alg} \rtimes \mathbb Z_k$ for $k=2,3,4$ and $6$. We use it to construct the twisted cochain complex corresponding to each of the two elelments of the group $\mathbb Z_2$. Thereafter we compute the cohomology groups of $\mathcal A_\theta^{alg} \rtimes \mathbb Z_2$ by locating the $\mathbb Z_2$ invariant cocycles.\par
While the bar resolution is not computationally convenient, modified Connes resolution for the algebraic noncommutative torus does make it easier to compute the cohomology groups. In order to locate the $\mathbb Z_2$ invariant cocycles of $H^\bullet(\mathcal A_\theta^{alg}, {}_{g}\mathcal A_\theta^{alg \ast})$, we need to use the resolution homotopy maps
$$h_{\ast}: C_\ast(\mathcal A_\theta^{alg}) \to J_\ast(\mathcal A_\theta^{alg})$$ and $$k_{\ast}: J_\ast(\mathcal A_\theta^{alg}) \to C_\ast(\mathcal A_\theta^{alg}),$$
where $J_\ast(\mathcal A_\theta^{alg})$ is the standard bar resolution $(J_k(\mathcal A_\theta^{alg}) = \mathcal B_\theta^{alg} \otimes (\mathcal A_\theta^{alg})^{\otimes k})$ and $C_\ast(\mathcal A_\theta^{alg})$ is the Connes resolution. We push a cocylcle $\mathcal D$ into the bar complex and let $\mathbb Z_2$ act on it. Then, in the Connes complex, we compare the pullback of this $\mathbb Z_2$-acted cocycle with $\mathcal D$ to check the $\mathbb Z_2$ invariance.
These maps were explicitly calculated in \cite{C} and \cite{Q}.

It is worthwhile to note that $\text{Hom}_{\mathcal B_\theta^{alg}}(\mathcal B_\theta^{alg}, {}_{-1}\mathcal A_\theta^{alg \ast})$ and  $\text{Hom}_{\mathcal B_\theta^{alg}}(\mathcal B_\theta^{alg}, {}_{}\mathcal A_\theta^{alg \ast})$ can be identified with ${}_{-1}\mathcal A_\theta^{alg \ast}$ and ${}_{}\mathcal A_\theta^{alg \ast}$, respectively.
Hence for $g=-1$ we have the following Hochschild cohomology complex:
$${}_{-1}\mathcal A_\theta^{alg \ast} \xrightarrow{{}_{-1}\alpha_1}{}_{-1}\mathcal A_\theta^{alg \ast} \oplus {}_{-1}\mathcal A_\theta^{alg \ast} \xrightarrow{{}_{-1}\alpha_2} {}_{-1}\mathcal A_\theta^{alg \ast} \rightarrow 0$$
where for $\varphi, \varphi_1 \text{ and } \varphi_2$ in $\mathcal A_\theta^{alg \ast}$, the maps  ${}_{-1}\alpha_1$ and  ${}_{-1}\alpha_2$ are as follows:
$${}_{-1}\alpha_1(\varphi) = (U_1^{-1}\varphi -\varphi U_1, U_2^{-1}\varphi - \varphi U_2);$$
$${}_{-1}\alpha_2(\varphi_1,\varphi_2)=U_2^{-1}\varphi_1 - \lambda \varphi_1 U_2 - \lambda U_1^{-1}\varphi_2 + \varphi_2U_1.$$

\begin{lemma}
$H^0(\mathcal A_\theta^{alg},{}_{-1}\mathcal A_\theta^{alg\ast})^{\mathbb Z_2} \cong \mathbb C^4$.
\end{lemma}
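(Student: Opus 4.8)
The plan is to read off $H^0(\mathcal A_\theta^{alg},{}_{-1}\mathcal A_\theta^{alg\ast})$ as the kernel of the first differential ${}_{-1}\alpha_1$ in the displayed complex, compute that kernel by hand, and then pass to $\mathbb Z_2$-invariants. Write a general functional as a formal series $\varphi=\sum_{(n,m)\in\mathbb Z^2}\varphi_{n,m}U_1^nU_2^m$; since $\mathcal A_\theta^{alg}$ consists of the finitely supported functions, every such series is a legitimate element of the dual, so no growth condition is imposed on the $\varphi_{n,m}$. The equation ${}_{-1}\alpha_1(\varphi)=0$ amounts to the two identities $U_1^{-1}\varphi=\varphi U_1$ and $U_2^{-1}\varphi=\varphi U_2$. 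Using $U_2^mU_1=\lambda^mU_1U_2^m$ and $U_2^{-1}U_1^n=\lambda^{-n}U_1^nU_2^{-1}$ (with $\lambda=e^{2\pi i\theta}$) and comparing the coefficient of $U_1^nU_2^m$ on each side, I expect these to become the recurrences
\[
\varphi_{n+1,m}=\lambda^m\varphi_{n-1,m},\qquad \varphi_{n,m+1}=\lambda^n\varphi_{n,m-1}.
\]

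Next I would solve this system. Each relation is a two-step recurrence: the first moves $n$ by $2$ at fixed $m$, the second moves $m$ by $2$ at fixed $n$, so together they propagate the value of $\varphi$ from any point of $\mathbb Z^2$ to the whole coset of the sublattice $2\mathbb Z^2$. There are exactly four such cosets, represented by $(0,0),(1,0),(0,1),(1,1)$. The one thing that must be checked is that the propagation is consistent, i.e.\ that going from $(n,m)$ to $(n+2,m+2)$ along the two different two-step paths produces the same factor; a direct check shows both paths multiply by $\lambda^{\,n+m+2}$, so there is no over-determination and the values on the four representatives may be chosen freely. Hence $\ker({}_{-1}\alpha_1)\cong\mathbb C^4$, with one coordinate for each coset.

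Finally I would compute the induced $\mathbb Z_2$-action and take invariants. The nontrivial element acts by the involution $U_j\mapsto U_j^{-1}$, so on series it sends $\varphi_{n,m}\mapsto\varphi_{-n,-m}$; since $(-n,-m)$ lies in the same coset as $(n,m)$ modulo $2\mathbb Z^2$, the involution preserves each of the four lines and acts on each as a scalar forced to be $\pm1$. Evaluating on the representatives and using the recurrences to express $\varphi_{-n,-m}$ in terms of $\varphi_{n,m}$, I expect every scalar to be $+1$: for the cosets $(0,0),(1,0),(0,1)$ this is immediate, while for $(1,1)$ the two factors $\lambda^{-1}$ and $\lambda$ produced by the two recurrences cancel. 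Consequently the action on $\ker({}_{-1}\alpha_1)$ is trivial and, because $\mathbb C[\mathbb Z_2]$ is semisimple, $H^0(\mathcal A_\theta^{alg},{}_{-1}\mathcal A_\theta^{alg\ast})^{\mathbb Z_2}=(\ker{}_{-1}\alpha_1)^{\mathbb Z_2}\cong\mathbb C^4$. The main obstacle here is bookkeeping rather than conceptual: getting the exponents in the twisted recurrences right (the left action is by $U_j^{-1}$, not $U_j$), verifying the consistency factor so the kernel is genuinely four- rather than lower-dimensional, and checking the $\lambda$-cancellation that makes the involution act trivially on the $(1,1)$-coordinate. As a sanity check, the companion untwisted computation yields $H^0(\mathcal A_\theta^{alg},\mathcal A_\theta^{alg\ast})^{\mathbb Z_2}\cong\mathbb C$, and $\mathbb C\oplus\mathbb C^4=\mathbb C^5$ agrees with Theorem 1.1.
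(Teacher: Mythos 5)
Your proof is correct and follows essentially the same route as the paper: both derive the coefficient recurrences $\varphi_{n+1,m}=\lambda^m\varphi_{n-1,m}$ and $\varphi_{n,m+1}=\lambda^n\varphi_{n,m-1}$ from $\ker({}_{-1}\alpha_1)$, identify the kernel as $\mathbb C^4$ indexed by the cosets of $2\mathbb Z^2$, and verify that the involution $\varphi_{n,m}\mapsto\varphi_{-n,-m}$ fixes each of the four generators (the paper does this case by case with explicit exponents $\lambda^{2kl+k}$, etc., which matches your cancellation argument for the $(1,1)$-coset). Your explicit check that the two recurrences propagate consistently (both paths $(n,m)\to(n+2,m+2)$ give the factor $\lambda^{n+m+2}$) is a small but worthwhile addition that the paper leaves implicit.
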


\begin{proof} Let $\varphi = \sum \varphi_{n,m}U_1^nU_2^m$ be an element of ${}_{-1}\mathcal A_\theta^{alg \ast}$. Then $\varphi$ is a 0-cocycle if and only if ${}_{-1}\alpha_1(\varphi) =0$, which implies that
$ U_1^{-1}\varphi -\varphi U_1=U_2^{-1}\varphi - \varphi U_2=0$. This further gives the relation $\varphi_{n+1,m} = \lambda^m \varphi_{n-1,m} = \lambda^{m+n-1} \varphi_{n-1,m-2}$ on its coefficients. Hence we see that $$H^0(\mathcal A_\theta^{alg},{}_{-1}\mathcal A_\theta^{alg\ast}) \cong \mathbb C^4.$$ The generators of this group are the cocycles generated by $\varphi_{0,0}$, $\varphi_{0,1}$, $\varphi_{1,0}$ and $\varphi_{1,1}$. \par
Let us denote by $\mathcal D_{i,j}$ the cocycle generated by $\varphi_{i,j}$, for $0 \leq i,j \leq 1$.
First consider the cocycle $\mathcal D_{0,0}$. The above relation on the coefficients of $\mathcal D_{0,0}$ gives $\varphi_{2n,2m} = {\lambda}^{2mn}\varphi_{0,0}$ for all $(2n,2m) \in \mathbb Z^2$. The maps $k_0$ and $h_0$ are idenity and hence the action of $\mathbb Z_2$ on $H^0(\mathcal A_\theta^{alg},{}_{-1}\mathcal A_\theta^{alg\ast})$ is given by $U_j  \to U_j^{-1}$ for $j=1,2$. Thus we conclude that the $\mathbb Z_2$ action leaves $\mathcal D_{0,0}$ invariant. \par

For the cocycle $\mathcal D_{0,1}$, we infer that $\varphi_{2k,2l+1} = \lambda^{2kl+k} \varphi_{0,1}$.  Now from
$$\varphi_{-2k,-2l-1} = \varphi_{2(-k),2(-l-1)+1}=\lambda^{2(-k)(-l-1)+(-k)} \varphi_{0,1}=\lambda^{2kl+k} \varphi_{0,1}$$
it follows that $\mathcal D_{0,1}$ is a $\mathbb Z_2$ invariant element of $H^0(\mathcal A_\theta^{alg},{}_{-1}\mathcal A_\theta^{alg \ast})$. In the case of $\mathcal D_{1,0}$, its coeffcients satisfy the relation $\varphi_{2k+1,2l} = \lambda^{2kl+l} \varphi_{1,0}$. Since $$\varphi_{-2k-1,-2l} = \varphi_{2(-k-1)+1,2(-l)}=\lambda^{2(-k-1)(-l)+(-l)} \varphi_{1,0}=\lambda^{2kl+l} \varphi_{1,0},$$ the cocycle $\mathcal D_{1,0} \in H^0(\mathcal A_\theta^{alg},{}_{-1}\mathcal A_\theta^{alg\ast})^{\mathbb Z_2}$. Finally for $\mathcal D_{1,1}$, we have $\varphi_{2k+1,2l+1} = \lambda^{2kl+k+l} \varphi_{1,1}$ and 
$\varphi_{-2k-1,-2l-1} = \lambda^{2kl+k+l} \varphi_{1,1}$. Hence $\mathcal D_{1,1} \in H^0(\mathcal A_\theta^{alg},{}_{-1}\mathcal A_\theta^{alg\ast})^{\mathbb Z_2}.$
\end{proof}

\begin{lemma}
$H^2(\mathcal A_\theta^{alg},{}_{-1}\mathcal A_\theta^{alg\ast})^{\mathbb Z_2} =0.$
\end{lemma}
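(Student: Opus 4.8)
The plan is to identify $H^2(\mathcal A_\theta^{alg},{}_{-1}\mathcal A_\theta^{alg\ast})$ with the cokernel of the last map ${}_{-1}\alpha_2$ of the three–term complex displayed above, and to prove the stronger statement that ${}_{-1}\alpha_2$ is already surjective onto the full dual ${}_{-1}\mathcal A_\theta^{alg\ast}$. Once $H^2(\mathcal A_\theta^{alg},{}_{-1}\mathcal A_\theta^{alg\ast})=0$, its $\mathbb Z_2$–invariant subspace is automatically $0$, which is the assertion of the lemma; in particular no analysis of the $\sigma$–action on the complex is needed for the argument.

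First I would expand ${}_{-1}\alpha_2$ in coordinates. Writing $\varphi_1=\sum a_{n,m}U_1^nU_2^m$ and $\varphi_2=\sum b_{n,m}U_1^nU_2^m$ and using $U_2U_1=\lambda U_1U_2$ in the form $U_2^kU_1^n=\lambda^{kn}U_1^nU_2^k$, a direct computation gives that the coefficient of $U_1^nU_2^m$ in ${}_{-1}\alpha_2(\varphi_1,\varphi_2)$ is
\[
\lambda^{-n}a_{n,m+1}-\lambda a_{n,m-1}-\lambda b_{n+1,m}+\lambda^m b_{n-1,m}.
\]
As a consistency check one verifies ${}_{-1}\alpha_2\circ{}_{-1}\alpha_1=0$ from this formula, which also pins down the signs. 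To represent an arbitrary target $\psi=\sum c_{n,m}U_1^nU_2^m$ it then suffices to set $\varphi_1=0$ and solve $-\lambda b_{n+1,m}+\lambda^m b_{n-1,m}=c_{n,m}$, i.e. $b_{n+1,m}=\lambda^{m-1}b_{n-1,m}-\lambda^{-1}c_{n,m}$. For each fixed $m$ this is a two–step linear recurrence in $n$ with nonvanishing leading coefficient; it decouples along the even and odd residues of $n$, so choosing $b_{0,m}$ and $b_{1,m}$ freely and propagating in both directions produces a solution. Collecting these over all $m$ yields a $\varphi_2$ with ${}_{-1}\alpha_2(0,\varphi_2)=\psi$, so ${}_{-1}\alpha_2$ is onto and the cokernel vanishes.

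The step I expect to be the real (and essentially only) subtle point is the verification that the $\varphi_2$ so produced genuinely lies in ${}_{-1}\mathcal A_\theta^{alg\ast}$. This is exactly where the non–finite–support of the dual is used: since $\mathcal A_\theta^{alg\ast}$ carries no growth or support restriction, an arbitrary bi–infinite sequence $(b_{n,m})$ is a legitimate element, and the recurrence can be run over all of $\mathbb Z$ without obstruction. Had the relevant dual instead been a completed one with a decay condition, the bi–infinite recurrence—whose homogeneous multiplier $\lambda^{m-1}$ has modulus one—would generically fail to admit decaying solutions for arbitrary $c$, and a nonzero cokernel could survive; it is precisely the algebraic, unrestricted nature of ${}_{-1}\mathcal A_\theta^{alg\ast}$ that makes ${}_{-1}\alpha_2$ surjective. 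I would therefore make this hypothesis explicit and conclude $H^2(\mathcal A_\theta^{alg},{}_{-1}\mathcal A_\theta^{alg\ast})=0$, hence in particular $H^2(\mathcal A_\theta^{alg},{}_{-1}\mathcal A_\theta^{alg\ast})^{\mathbb Z_2}=0$.
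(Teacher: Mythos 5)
Your proof is correct, but it takes a genuinely different---and in fact stronger---route than the paper, and it is worth spelling out the discrepancy. The paper never proves surjectivity of ${}_{-1}\alpha_2$: it asserts that $H^2(\mathcal A_\theta^{alg},{}_{-1}\mathcal A_\theta^{alg\ast})\cong\mathbb C^4$, generated by cocycles supported at $\varphi_{0,0},\varphi_{1,0},\varphi_{0,1},\varphi_{1,1}$, and then obtains the lemma by checking that none of these four classes is $\mathbb Z_2$-invariant. Your computation shows that this intermediate claim is actually false: the relations ${}_{-1}\alpha_2(U_1^nU_2^{m+1},0)$ and ${}_{-1}\alpha_2(0,U_1^{n+1}U_2^m)$ invoked there only generate the image of the \emph{finitely supported} part of the domain, whereas the domain in the cohomology complex is the full formal-series dual. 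Allowing infinite sums, each alleged generator is itself a coboundary; for instance your recurrence solution for the target $\psi=1$ telescopes to
\[
{}_{-1}\alpha_2\Bigl(0,\ \sum_{k\ge 0}\lambda^{k}U_1^{-2k-1}\Bigr)
\;=\;\sum_{k\ge 0}\lambda^{k}U_1^{-2k}\;-\;\sum_{k\ge 0}\lambda^{k+1}U_1^{-2k-2}\;=\;1,
\]
which is precisely the infinite-sum device the paper itself deploys in its $H^1$ argument (the sums $\gamma=\gamma_0+\gamma_2+\cdots$ and $\rho=\rho'+\rho''+\cdots$), but fails to apply in degree $2$. Two further checks confirm your answer over the paper's: first, $H^2(\mathcal A_\theta^{alg},{}_{-1}\mathcal A_\theta^{alg\ast})$ is the linear dual of the twisted homology $HH_2(\mathcal A_\theta^{alg},{}_{-1}\mathcal A_\theta^{alg})$, whose elements are finitely supported and are killed by the kernel recurrences (of the type $a_{n+1,m}=\lambda^{m\pm1}a_{n-1,m}$), so the dual group must vanish; second, the paper's own case analysis computes the $\mathbb Z_2$-action on its purported $\mathbb C^4$ to be multiplication by $\lambda^{-1}$, which is not an involution when $\theta\notin\mathbb Q$---something only possible if the classes in question are zero, i.e.\ exactly what you prove. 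So both arguments reach the lemma, but yours proves the stronger statement $H^2(\mathcal A_\theta^{alg},{}_{-1}\mathcal A_\theta^{alg\ast})=0$, renders the entire $\mathbb Z_2$-invariance analysis unnecessary, and correctly isolates the one point where the argument genuinely uses that the algebraic dual carries no support or decay restriction.
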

\begin{proof}
Let $\varphi \in \mathcal A_\theta^{alg \ast}$ and let $\widetilde{\varphi}$ be the corresponding element of $\text{Hom}{\mathcal B_\theta^{alg}}(J_2,\mathcal A_\theta^{alg\ast})$. Then
$$\widetilde{\varphi}(a\otimes b \otimes e_1 \wedge e_2)(x)=\varphi((-1\cdot b)xa),$$
for all  $a,b,x \in \mathcal A_\theta^{alg}$.
Let $\psi = k_2^{\ast} \widetilde{\varphi} = \widetilde{\varphi} \circ k_2$. We have
$$\psi(x_0,x_1,x_2)=\widetilde{\varphi}(k_2(I \otimes x_1 \otimes x_2))(x_0),$$
for all $x_0,x_1,x_2 \in \mathcal A_\theta^{alg}$.
The group $\mathbb Z_2$ acts on $\mathcal A_\theta^{alg}$ in the bar complex as $$-1 \cdot \chi(x_0,x_1,x_2)=\chi(-1\cdot x_0,-1 \cdot x_1, -1 \cdot x_2).$$ 
Further we pull the map ${}_{-1}\psi = -1 \cdot \psi$ back on to the Connes complex via the map $h_2^{\ast}$. Let $w=h_2^{\ast}({}_{-1}\psi)$ denote the pullback of ${}_{-1}\psi$ on the Connes complex. We have  
\begin{center}
 $w(x_0)={}_{-1}\psi(x_0,U_2,U_1)-\lambda{}_{-1}\psi(x_0,U_1,U_2)=\psi(-1 \cdot x_0, U_2^{-1},U_1^{-1}) -\lambda \psi(-1 \cdot x_0, U_1^{-1},U_2^{-1}) \newline= \widetilde{\varphi}(k_2(I \otimes U_2^{-1}\otimes U_1^{-1}))(-1\cdot x_0)-\lambda \widetilde{\varphi}(k_2(I \otimes U_1^{-1}\otimes U_2^{-1}))(-1\cdot x_0)$.
\end{center}
Following the calculations from \cite[Section 6]{Q}, we have
$$k_2(I \otimes U_2^{-1}\otimes U_1^{-1})-\lambda k_2(I \otimes U_1^{-1}\otimes U_2^{-1})= (U_1^{-1}U_2^{-1} \otimes U_1^{-1}U_2^{-1}).$$
Applying this we conclude that
\begin{center}
$ \widetilde{\varphi}(k_2(I \otimes U_2^{-1}\otimes U_1^{-1}))(-1\cdot x_0)-\lambda \widetilde{\varphi}(k_2(I \otimes U_1^{-1}\otimes U_2^{-1}))(-1\cdot x_0)=\widetilde{\varphi}(U_1^{-1}U_2^{-1} \otimes U_1^{-1}U_2^{-1})(-1\cdot x_0) = \varphi(U_1U_2\cdot (-1 \cdot x_0)\cdot U_1^{-1}U_2^{-1})$.
\end{center}
Hence we need to compare $\varphi(x)$ with $\varphi(U_1U_2\cdot (-1 \cdot x)\cdot U_1^{-1}U_2^{-1})$. Using the Connes complex, we see that $H^2(\mathcal A_\theta^{alg},{}_{-1}\mathcal A_\theta^{alg\ast})^{\mathbb Z_2} = {}_{-1}\mathcal A_\theta^{alg \ast} / Im({}_{-1}\alpha_2)$. Since ${}_{-1}\alpha_2(U_2,0) = 1-\lambda U_2^2$ and ${}_{-1}\alpha_2(0,U_1) =  U_1^2-\lambda$, we have $H^2(\mathcal A_\theta^{alg},{}_{-1}\mathcal A_\theta^{alg\ast}) \cong \mathbb C^4$ generated by the cocycles supported at $\varphi_{0,0}$, $\varphi_{1,0}$, $\varphi_{0.1}$ and $\varphi_{1,1}$.

\underline{Case 1}:
We check the invariance of $\varphi_{0,0}$. From $\varphi_{0,0}(U_1U_2\cdot (-1 \cdot x)\cdot U_1^{-1}U_2^{-1})={\lambda}^{-1}x_{0,0}$ and $\varphi_{0,0}(x) = x_{0,0}$, we see that $\varphi_{0,0}$ is \emph{not} invariant under the $\mathbb Z_2$ action.\newline

\underline{Case 2}: Observe that for $\varphi_{1,0}$, we have $\varphi_{1,0}(U_1U_2\cdot (-1 \cdot x)\cdot U_1^{-1}U_2^{-1})=x_{-1,0}$ and $\varphi_{1,0}(x) = x_{1,0}$.
Since the cocycle class $\varphi_{1,0}$ is equivalent to the class $\lambda \varphi_{-1,0}$,  it is \emph{not} invariant under the $\mathbb Z_2$ action.\newline

\underline{Case 3}: We check the invariance of $\varphi_{0,1}$. We have $\varphi_{0,1}(U_1U_2\cdot (-1 \cdot x)\cdot U_1^{-1}U_2^{-1})=x_{0,-1}$ and $\varphi_{0,1}(x) = x_{0,1}$. Since the cocycle class $\varphi_{0,1}$ is equivalent to the class $\lambda^{-1} \varphi_{0,-1}$, it is \emph{not} invariant under the $\mathbb Z_2$ action.\newline

\underline{Case 4}: Finally, we check the invariance of $\varphi_{1,1}$. We have $\varphi_{1,1}(U_1U_2\cdot (-1 \cdot x)\cdot U_1^{-1}U_2^{-1})={\lambda}^{-1}x_{-1,-1}$ and $\varphi_{1,1}(x) = x_{1,1}$. Since the cocycle class $\varphi_{1,1}$ is equivalent to the cocycle class $\varphi_{-1,-1}$, the cocycle is \emph{not} invariant under the $\mathbb Z_2$ action.

For $\psi = a \varphi_{0,0}+ b \varphi_{1,0} + c\varphi_{0,1} + d\varphi_{1,1}$, if $\Psi$ is the pullback of the corresponding cocycle in the bar complex after the $\mathbb Z_2$ action, then:
$$\Psi= a{\lambda}^{-1} \varphi_{0,0} + b {\lambda}^{-1} \varphi_{1,0} + c {\lambda}^{-1} \varphi_{0,1} + d {\lambda}^{-1} \varphi_{1,1}.$$
We see that the coefficients of this pullback are different from those of the original cocycle.
Therefore we conclude that
$$H^2(\mathcal A_\theta^{alg},{}_{-1}\mathcal A_\theta^{alg\ast})^{\mathbb Z_2} =0.$$
We remark in this computation that although $H^2(\mathcal A_\theta^{alg}, {}_{-1}\mathcal A_\theta^{alg \ast})$ is of $4$ dimension, there is no nontrivial $\mathbb Z_2$ invariant cocycle.
\end{proof}
 For $\varphi \in \mathcal A_\theta^{alg \ast} \oplus \mathcal A_\theta^{alg \ast}$, we define the diagram $Dgm(\varphi)\;( \subset \mathbb Z^2 \oplus \mathbb Z^2 \oplus \mathbb Z^2$) associated to it \cite[Section 7]{Q}. Two elements $a ,b \in \mathbb C$ indexed by the lattice $\mathbb Z^2$ are said to be $f$-connected and drawn on the lattice plane as\\
$$
\begin{tikzpicture}
\draw (-0.2,0) -- (1.2,0) node(xline)[right] {$b$};
\draw (1.2,0) -- (-0.2,0) node(xline)[left] {$a$};
\fill (canvas cs:x=-0.2cm,y=0cm) circle(2pt);
\fill (canvas cs:x=1.2cm,y=0cm) circle (2pt);
\end{tikzpicture}
$$
if there exists $f \in \mathbb C[x,y,w,z]$ whose roots are $a$ and $b$.
For example, consider the following equation
$$a^1_{6,1}-a^2_{5,0}={\lambda}a^1_{4,1}-{\lambda}^{-5}a^2_{5,2}.$$
The corresponding diagram is as below, where the boxes represent elements of $a^1_{\bullet, \bullet}$ and the thick dots that of $a^2_{\bullet, \bullet}$.
\begin{center}
\begin{tikzpicture}
\draw (-1,0)node(xline)[left] {$a^1_{4,1\text{ }}$} -- (1,0 )node(xline)[right] {$\text{  }a^1_{6,1}$};
\draw[draw=white,double=black,very thick] (0,-1)node(yline)[below] {$a^2_{5,0}$} -- (0,1)node(yline)[above] {$a^2_{5,2}$};
\fill (canvas cs:x=0cm,y=1cm) circle (2pt);
\fill (canvas cs:x=0cm,y=-1cm) circle (2pt);
\node at (-1,0) [transition]{};
\node at (1,0) [transition]{};
\end{tikzpicture}
\end{center}
\par In the above example we see that $f(x,y,w,z) = x-y + \lambda^{-5}z- \lambda w$ has its roots as $a^1_{4,1}, a^1_{6,1}, a^2_{5,0}$ and $a^2_{5,2}$. For $\varphi=(\varphi^1, \varphi^2) \in \mathcal A_\theta^{alg \ast} \oplus \mathcal A_\theta^{alg \ast}$, we use all the  ${}_{-1}\alpha_1$ equations to ${}_{-1} \alpha_1$-connect the non-zero elements $(\varphi^1_{n,m}, \varphi^2_{r,s})$. We call this lattice graph as $Dgm(\varphi)$.
\par
We notice that, for a given lattice point $(n,m)$, there are three possible values at that point. They are:
\begin{enumerate}
\item $\varphi^1_{n,m}$
\item $\varphi^2_{n,m} $
\item 0.
\end{enumerate}
Hence we conclude that the kernel diagram $Dgm(\varphi)$ of $\varphi$ is a subset of  $\mathbb Z^2 \oplus \mathbb Z^2 \oplus \mathbb Z^2$. It can be easily figured out \cite{Q} that there are no edges to the graph $Dgm(\varphi)$, and the graph is a disjoint union of closed graphs with no open edges. These graphs can be infinitely supported as $\mathcal A_\theta^{alg \ast}$ consists of elements which are infinitely supported. For $1 \leq i \leq 3$, let the maps $\pi_i : \mathbb Z^2 \oplus \mathbb Z^2 \oplus \mathbb Z^2 \to \mathbb Z^2$ be the $i$-th projection, projecting the diagram $Dgm(\varphi)$ to the $i$-th $\mathbb Z^2$. From now onwards we shall deal with the map $\pi_1$ and similar agruments will hold for $\pi_2$ and $\pi_3$.

\begin{defn}[Lines]
For $s_0\in \mathbb Z$ and $\varphi(=(\varphi_1, \varphi_2)) \in ker({}_{-1}\alpha_2)$, we define a $\mathbb Z^2$ lattice  $H_{s_0}$ such that 
$$(H_{s_0})_{w,s}\coloneqq\begin{cases} 
(\pi_1(Dgm(\varphi)))_{w,s} & \text{ for } s = s_0\\
0 & else. \end{cases}$$ \\
\end{defn}

\begin{lemma} \label{thm:2.4}
Given $s_0 \in \mathbb Z$ and $\varphi \in  ker({}_{-1}\alpha_2)$, there exists $\gamma_{s_0} \in \mathcal A_\theta^{alg \ast}$ such that $\pi_1(Dgm({}_{-1}\alpha_1(\gamma_{s_0}))))_{w,s_0}=(H_{s_0})_{w,s_0}$ for all $w \in \mathbb Z$.
\end{lemma}  

\begin{proof}
We know that ${}_{-1}\alpha_1(\varphi)=(U_1^{-1}\varphi-\varphi U_1, U_2^{-1}\varphi-\varphi U_2)$. If ${}_{-1}\alpha_1(\varphi) = (\varphi_1, \varphi_2)$, then
$$\varphi^1_{n,m}=\varphi_{n+1,m} -  {\lambda}^m \varphi_{n-1,m}\text{ and } \varphi^2_{n,m}={\lambda}^{-n} \varphi_{n,m+1}-\varphi_{n,m-1}.$$
The diagram $\pi_1(Dgm({}_{-1}\alpha_1(\varphi)))$ can be infinitely supported, a connected component of it resembles the one below:
\begin{center}
\begin{tikzpicture}
\draw[step=1.0, black, thin, xshift=.5cm, yshift=.5cm](-2,-2) grid(2,2);
\fill (0,.5) circle (2pt);
\fill (0,-.5) circle (2pt);
\fill (0,1.5) circle (2pt);
\fill (0,-1.5) circle (2pt);
\fill (0,2.5) circle (2pt);
\fill (1,.5) circle (2pt);
\fill (1,-.5) circle (2pt);
\fill (1,1.5) circle (2pt);
\fill (1,-1.5) circle (2pt);
\fill (1,2.5) circle (2pt);
\fill (2,.5) circle (2pt);
\fill (2,-.5) circle (2pt);
\fill (2,1.5) circle (2pt);
\fill (2,-1.5) circle (2pt);
\fill (2,2.5) circle (2pt);
\fill (-1,.5) circle (2pt);
\fill (-1,-.5) circle (2pt);
\fill (-1,1.5) circle (2pt);
\fill (-1,-1.5) circle (2pt);
\fill (-1,2.5) circle (2pt);
\node at (.5,0) [transition]{};
\node at (-.5,0) [transition]{};
\node at (1.5,0) [transition]{};
\node at (-1.5,0) [transition]{};
\node at (2.5,0) [transition]{};
\node at (.5,1) [transition]{};
\node at (-.5,1) [transition]{};
\node at (1.5,1) [transition]{};
\node at (-1.5,1) [transition]{};
\node at (2.5,1) [transition]{};
\node at (.5,-1) [transition]{};
\node at (-.5,-1) [transition]{};
\node at (1.5,-1) [transition]{};
\node at (-1.5,-1) [transition]{};
\node at (2.5,-1) [transition]{};
\node at (.5,2) [transition]{};
\node at (-.5,2) [transition]{};
\node at (1.5,2) [transition]{};
\node at (-1.5,2) [transition]{};
\node at (2.5,2) [transition]{};
\draw[dashed](-2,0)--(4,0)node[right]{$y=s_0$.};
\draw[dashed](-1.5,2.5)--(-1.5,3)node[right]{};
\draw[dashed](-0.5,2.5)--(-0.5,3)node[right]{};
\draw[dashed](0.5,2.5)--(0.5,3)node[right]{};
\draw[dashed](1.5,2.5)--(1.5,3)node[right]{};
\draw[dashed](2.5,2.5)--(2.5,3)node[right]{};
\draw[dashed](-1.5,-2.0)--(-1.5,-1.5)node[right]{};
\draw[dashed](-0.5,-2.0)--(-0.5,-1.5)node[right]{};
\draw[dashed](0.5,-2.0)--(0.5,-1.5)node[right]{};
\draw[dashed](1.5,-2.0)--(1.5,-1.5)node[right]{};
\draw[dashed](2.5,-2.0)--(2.5,-1.5)node[right]{};
\draw[dashed](-2,1.5)--(-1.5,1.5)node[right]{};
\draw[dashed](-2,2.5)--(-1.5,2.5)node[right]{};
\draw[dashed](-2,-1.5)--(-1.5,-1.5)node[right]{};
\draw[dashed](2.5,1.5)--(3,1.5)node[right]{};
\draw[dashed](2.5,2.5)--(3,2.5)node[right]{};
\draw[dashed](2.5,-1.5)--(3,-1.5)node[right]{};
\draw[dashed](2.5,-.5)--(3,-.5)node[right]{};
\draw[dashed](2.5,.5)--(3,.5)node[right]{};
\draw[dashed](-2,-.5)--(-1.5,-.5)node[right]{};
\draw[dashed](-2,.5)--(-1.5,.5)node[right]{};
\end{tikzpicture}
\end{center}
Assume that $\varphi^1_{0,s_0} \neq 0$. It is clear from the diagram that in the row $y=s_0$ of the lattice $\pi_1(Dgm(\varphi))$, $\varphi^2_{w,s_0} =0$ for all $w \in \mathbb Z$. Define
$$(\gamma^{(1)}_{s_0})_{w,s} = \begin{cases}
-{\lambda}^{-s_0} \varphi^1_{0,s} & \text{ for } (w,s)=(-1,s_0)\\
0 & else. \end{cases}$$ \\
We have $\pi_1(Dgm({}_{-1}\alpha_1(\gamma^{(1)}_{s_0})))_{0,s_0}-(H_{s_0})_{0,s_0} = 0$. We define 
\begin{center}
$(\gamma^{(2)}_{s_0})_{w,s} = \begin{cases}
-{\lambda}^{-s_0} (\varphi^1_{-2,s}-{\lambda}^{-{s_0}} \varphi^1_{0,s})& \text{ for } (w,s)=(-3,s_0)\\
(\gamma^{(1)}_{s_0})_{w,s} & else. \end{cases}$ \\ 
\end{center}
Then we have
$$\pi_1(Dgm({}_{-1}\alpha_1(\gamma^{(2)}_{s_0})))_{-2,s_0}-(H_{s_0})_{-2,s_0} = \pi_1(Dgm({}_{-1}\alpha_1(\gamma^{(2)}_{s_0})))_{0,s_0}-(H_{s_0})_{0,s_0} = 0.$$
Similarly we can construct a  sequence $\gamma^{(n)}_{s_0}$ which satisfies the required condition for finitely many lattice points. Define $\gamma^{\leq}_{s_0} := \displaystyle \lim_{n \to \infty} \gamma^{(n)}_{s_0}$. Since $\gamma^{\leq}_{s_0} \in \mathcal A_\theta^{alg \ast}$, we have 
 $$\pi_1(Dgm({}_{-1}\alpha_1(\gamma^{\leq}_{s_0})))_{\bullet,s_0}-(H_{s_0})_{\bullet,s_0} = 0\text{ for }\bullet \leq 0.$$
We can similarly define $\gamma^{>}_{s_0}$ such that 
$$\pi_1(Dgm(({}_{-1}\alpha_1(\gamma^{>}_{s_0}))))_{\bullet,s_0}-(H_{s_0})_{\bullet,s_0} = 0\text{ for }\bullet > 0.$$
Then $\gamma_{s_0} := \gamma^{\leq}_{s_0} + \gamma^{>}_{s_0}$ satisfies the following equation:
$$\pi_1(Dgm(({}_{-1}\alpha_1(\gamma_{s_0}))))_{\bullet,s_0}-(H_{s_0})_{\bullet,s_0} = 0 \text{ for } \bullet \in \mathbb Z.$$
This completes the proof.
\end{proof}
It is interesting to note the degree of freedom that we had while constructing the above $\gamma_{s_0}$. This can be traced back to the fact that the kernel of ${}_{-1}\alpha_1$ is a $4$ dimensional vector space. As we shall prove that an arbitrary cocycle is a coboundary, it is worthwhile to note the various possibilities we have in doing so; hence revealing the nature of the map ${}_{-1}\alpha_1$.

\begin{lemma} \label{thm:hoch12}
$H^1(\mathcal A_\theta^{alg},{}_{-1}\mathcal A_\theta^{alg\ast}) = 0$.
\end{lemma}
\begin{proof}
 Let $\varphi$ belonging to $ker({}_{-1}\alpha_2)$ be a 1-cochain in the Connes complex. Let us understand the construction of $\pi_1(Dgm(\varphi))$. It consists of alternate non-zero entries, meaning one considering a row/column will find zeros at alternate positions. It has rows/columns of $\varphi^2$'s and $\varphi^1$'s alternately placed. \par
For $s_0 \in \mathbb Z$, we get a $\gamma_{s_0} \in \mathcal A_\theta^{alg \ast}$ as in Lemma  \ref{thm:2.4}. Define $\gamma = \gamma_0+ \gamma_2 + \gamma_{-2} +\cdots \in \mathcal A_\theta^{alg \ast}$. We observe that the lattice
$$\pi_1(Dgm({}_{-1}\alpha_1(\gamma)-(\varphi)))$$
has zero rows placed alternately. These rows are precisely the rows of $\varphi^1$ in $\pi_1(Dgm(\varphi))$. The other alternate set of rows is the rows of the $\varphi'^2$'s, where $\varphi'^2 \in \mathcal A_\theta^{alg \ast}$ are the bulletted points($\bullet$) in the lattice diagram $\pi_1(Dgm({}_{-1}\alpha_1(\gamma)-(\varphi)))$  . We state that $\pi_1(Dgm({}_{-1}\alpha_1(\gamma)-(\varphi)))$ is the diagram of an image element, that is, there exists $\rho \in \mathcal A_\theta^{alg \ast}$ such that 
$$\pi_1(Dgm({}_{-1}\alpha_1(\rho)) =  \pi_1(Dgm({}_{-1}\alpha_1(\gamma)-(\varphi))).$$
It is easy to see as there is no kernel equation that relates $(\varphi'_2)_{p,q}$ with $(\varphi'_2)_{l,w}$ for $ q \neq w$. Also note that if there is even a single zero entry in any of these rows, then the whole row is ought to be a \emph{zero row}. This can be seen by the repetitive application of the kernel equation to the row starting with the kernel equation containing the zero entry. 
\begin{lemma}
For $w_0 \in \mathbb Z$. There exist $\rho_{w_0} \in \mathcal A_\theta^{alg \ast}$ such that:
$$\pi_1(Dgm({}_{-1}\alpha_1(\rho)))_{w,s} := \begin{cases}
\pi_1(Dgm({}_{-1}\alpha_1(\gamma)-(\varphi)))_{w,s}& \text{ for } (w,s)=(w_0,s)\\ 
0 & else \end{cases}.$$ 
\end{lemma}
\begin{proof}
We define $\rho_{w_0}$ such that
\begin{center}
$(\rho_{w_0})_{n,m} = \begin{cases}
\varphi^1_{0,0} & \text{ for } (n,m) = (w_0,-1)\\
0  & \text{ for } (n,m)=(w_0,1) \text{ and for }(n,m) \text{ such that } n \neq w_0 . \end{cases}$\\
\end{center}
Thereafter we define $(\rho_{w_0})_{n,m}$ for $m<-1  \text{ and }n=w_0$ in the following iterated way.
$$(\rho_0)_{n,m} = - \varphi^2{'}_{n+1,m}.$$
Where $\varphi^2{'}_{n+1,m}$ is second entry of $\pi_1(Dgm({}_{-1}\alpha_1(\gamma+\rho_{w_0})-(\varphi)))^{}_{n+1,m}$.
Clearly, $\pi_1(Dgm({}_{-1}\alpha_1(\gamma+\rho_{w_0})-(\varphi)))_{w_0,s} = 0$ for all $s<0$. Similarly, we define $\rho_{w_0}$ for $m>0$ and hence we have $\rho_{w_0}$ satisfying 
$$\pi_1(Dgm({}_{-1}\alpha_1(\gamma+\rho_{w_0})-(\varphi)))_{w_0,s} = 0 \text{ for all } s \in \mathbb Z.$$
\end{proof}
Now we prove Lemma \ref{thm:hoch12}. The element $\rho =  \sum_{s \in \mathbb N}(\rho_{w_0})$ has the following property:
$$\pi_1(Dgm({}_{-1}\alpha_1(\rho)) =  \pi_1(Dgm({}_{-1}\alpha_1(\gamma)-(\varphi))).$$ 
Hence,  $H^1(\mathcal A_\theta^{alg},{}_{-1}\mathcal A_\theta^{alg \ast}) = 0$.
\end{proof}

\section{The $\mathbb Z_2$ invariant Hochschild cohomology $H^{\bullet}(\mathcal A_\theta^{alg}, \mathcal A_\theta^{alg \ast})^{\mathbb Z_2}$}

 For $g=1$, we have the following cohomology complex
$${}_{}\mathcal A_\theta^{alg \ast} \xrightarrow{{}_{}\alpha_1}{}_{}\mathcal A_\theta^{alg \ast} \oplus {}_{}\mathcal A_\theta^{alg \ast} \xrightarrow{{}_{}\alpha_2} {}_{}\mathcal A_\theta^{alg \ast} \rightarrow 0,$$
where the maps  ${}_{}\alpha_1$ and  ${}_{}\alpha_2$ are as follows:
$${}_{}\alpha_1(\varphi) = (U_1 \varphi -\varphi U_1, U_2 \varphi - \varphi U_2);$$
$${}_{}\alpha_2(\varphi_1,\varphi_2)=U_2\varphi_1 - \lambda \varphi_1 U_2 - \lambda U_1\varphi_2 + \varphi_2U_1.$$

\begin{lemma}
$H^0(\mathcal A_\theta^{alg},\mathcal A_\theta^{alg\ast})^{\mathbb Z_2} \cong \mathbb C$.
\end{lemma}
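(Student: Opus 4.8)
The plan is to run exactly the same Koszul/Connes-resolution computation as in the preceding lemmas, but now with the \emph{untwisted} bimodule structure on $\mathcal A_\theta^{alg \ast}$. Concretely, $H^0(\mathcal A_\theta^{alg},\mathcal A_\theta^{alg\ast})$ is computed as the kernel of the first differential $\alpha_1$ of the untwisted complex, where $\alpha_1(\varphi)=(U_1\varphi-\varphi U_1,\,U_2\varphi-\varphi U_2)$ (the analogue of ${}_{-1}\alpha_1$, but without the twist on the left action). So the first step is to write $\varphi=\sum \varphi_{n,m}U_1^nU_2^m \in \mathcal A_\theta^{alg \ast}$ and expand the two defining conditions $U_1\varphi=\varphi U_1$ and $U_2\varphi=\varphi U_2$ into relations on the coefficients.

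Using $U_2^mU_1=\lambda^m U_1U_2^m$ and $U_2U_1^n=\lambda^nU_1^nU_2$ with $\lambda=e^{2\pi i\theta}$, a direct coefficient comparison gives $(\alpha_1(\varphi))^1_{n,m}=(1-\lambda^m)\varphi_{n-1,m}$ and $(\alpha_1(\varphi))^2_{n,m}=(\lambda^n-1)\varphi_{n,m-1}$. Requiring both to vanish, and invoking the irrationality of $\theta$ (so that $\lambda^k=1$ forces $k=0$), collapses the support: the first relation kills every coefficient with $m\neq 0$ and the second kills every coefficient with $n\neq 0$. Hence $\varphi$ is supported only at the origin, and $H^0(\mathcal A_\theta^{alg},\mathcal A_\theta^{alg\ast})\cong\mathbb C$, generated by the cocycle supported at $\varphi_{0,0}$ (the canonical trace $\tau$). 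This is the untwisted counterpart of the earlier $\mathbb C^4$ computation, and I expect it to be the routine part.

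It then remains to take the $\mathbb Z_2$-invariants, where $\mathbb Z_2$ acts by $U_j\mapsto U_j^{-1}$, i.e.\ on coefficients by $(-1\cdot\varphi)_{n,m}=\varphi_{-n,-m}$ (up to a phase coming from the noncommutativity). Since the single generator is supported solely at $(0,0)$, which is the fixed point of $(n,m)\mapsto(-n,-m)$ and carries no phase because $U_1^0U_2^0=1$ is central, the generator is manifestly invariant. Therefore the invariant subspace is all of $\mathbb C$, giving $H^0(\mathcal A_\theta^{alg},\mathcal A_\theta^{alg\ast})^{\mathbb Z_2}\cong\mathbb C$.

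The only point requiring care is the phase bookkeeping in the commutation relations and in the $\mathbb Z_2$-action on the dual; the contrast with the twisted case is instructive and worth a remark. In the twisted complex the relation reads $\varphi_{n+1,m}=\lambda^m\varphi_{n-1,m}$, which propagates a nonzero value along the whole lattice and yields a four-dimensional kernel supported on cosets; here the untwisted relations instead carry a genuine factor $(1-\lambda^m)$ (respectively $(\lambda^n-1)$) that is nonzero off the axes, forcing the immediate collapse to the origin and hence the single-dimensional answer.
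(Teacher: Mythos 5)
Your proposal is correct and follows essentially the same route as the paper: both compute $H^0$ as the kernel of the untwisted Koszul differential $\alpha_1$, use the irrationality of $\theta$ to collapse the support of a commuting functional to the single coefficient $\varphi_{0,0}$ (the trace), and then note that this generator is manifestly $\mathbb Z_2$-invariant because the action in degree zero is just the natural action on $\mathcal A_\theta^{alg\ast}$ (the paper phrases this via $h_0=k_0=\mathrm{id}$). Your coefficient relations $(1-\lambda^m)\varphi_{n-1,m}=0$ and $(\lambda^n-1)\varphi_{n,m-1}=0$ are in fact a cleaner statement of the condition the paper writes (somewhat garbled) as $\varphi_{n-1,m}=\lambda^m\varphi_{n-1,m}$.
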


\begin{proof} Let $\varphi =\sum \varphi_{n,m}U_1^nU_2^m$ be an element of $\mathcal A_\theta^{alg \ast}$. If $\alpha_1(\varphi)=0$, then we have $ U_1\varphi -\varphi U_1=U_2\varphi - \varphi U_2=0$. This imples that we have the following relations on the coefficients: 
$$\varphi_{n-1,m} = \lambda^m \varphi_{n-1,m} = \lambda^{m+n-1} \varphi_{n-1,m}.$$
We see that these relations are satisfied only for $m=n-1=0$. Hence, we have 
$$H^0(\mathcal A_\theta^{alg},\mathcal A_\theta^{alg \ast}) \cong \mathbb C$$
and is generated by $\varphi_{0,0}$. Since the action of $\mathbb Z_2$ on the bar complex is the same as on the Connes complex, we deduce that $H^0(\mathcal A_\theta^{alg},\mathcal A_\theta^{alg\ast})^{\mathbb Z_2} \cong \mathbb C$.
\end{proof}

\begin{lemma}
$H^2(\mathcal A_\theta^{alg},\mathcal A_\theta^{alg\ast})^{\mathbb Z_2} \cong \mathbb C$.
\end{lemma}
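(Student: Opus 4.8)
The plan is to mirror the proof of the previous lemma on $H^2(\mathcal A_\theta^{alg},{}_{-1}\mathcal A_\theta^{alg\ast})^{\mathbb Z_2}$. First I would identify $H^2(\mathcal A_\theta^{alg},\mathcal A_\theta^{alg\ast})$ from the (untwisted) Koszul complex
\[
\mathcal A_\theta^{alg\ast}\xrightarrow{\ \alpha_1\ }\mathcal A_\theta^{alg\ast}\oplus\mathcal A_\theta^{alg\ast}\xrightarrow{\ \alpha_2\ }\mathcal A_\theta^{alg\ast}\to 0,
\]
with $\alpha_1(\varphi)=(U_1\varphi-\varphi U_1,\ U_2\varphi-\varphi U_2)$ and $\alpha_2(\varphi_1,\varphi_2)=U_2\varphi_1-\lambda\varphi_1U_2-\lambda U_1\varphi_2+\varphi_2U_1$, the untwisted analogues of ${}_{-1}\alpha_1,{}_{-1}\alpha_2$ (one checks $\alpha_2\circ\alpha_1=0$ from $U_2U_1=\lambda U_1U_2$). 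Reading off coefficients gives $\alpha_2(\varphi_1,\varphi_2)_{n,m}=(\lambda^n-\lambda)(\varphi_1)_{n,m-1}+(\lambda^m-\lambda)(\varphi_2)_{n-1,m}$, and since $\lambda^n-\lambda$, resp. $\lambda^m-\lambda$, vanishes only at $n=1$, resp. $m=1$, every coefficient except the one at $(1,1)$ lies in $\mathrm{Im}(\alpha_2)$. Hence $H^2(\mathcal A_\theta^{alg},\mathcal A_\theta^{alg\ast})=\mathcal A_\theta^{alg\ast}/\mathrm{Im}(\alpha_2)\cong\mathbb C$, generated by the single cocycle $\psi$ supported at $(1,1)$; the whole content of the lemma is then the $\mathbb Z_2$-behaviour of this one class.

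Next I would compute the induced action of the involution $\sigma:U_j\mapsto U_j^{-1}$ on this one-dimensional group exactly as in the twisted computation, now using the untwisted identification $\tilde\varphi(a\otimes b\otimes e_1\wedge e_2)(x)=\varphi(bxa)$. With $\psi=k_2^{\ast}\tilde\varphi$ and $w=h_2^{\ast}(\sigma\cdot\psi)$, the same comparison identity $k_2(I\otimes U_2^{-1}\otimes U_1^{-1})-\lambda k_2(I\otimes U_1^{-1}\otimes U_2^{-1})=U_1^{-1}U_2^{-1}\otimes U_1^{-1}U_2^{-1}$ reduces the question to comparing $\varphi$ with the functional obtained by feeding $U_1^{-1}U_2^{-1}\,\sigma(x)\,U_1^{-1}U_2^{-1}$ into it. The decisive difference from the twisted case is that there the left slot carried $\sigma(b)$, producing a genuine conjugation with a surviving phase $\lambda^{\pm1}\neq 1$ that destroyed invariance; in the present sector the contributions of the two Koszul directions $e_1,e_2$ are expected to recombine to the trivial phase.

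The conceptual reason, and the cleanest way to pin the phase down, is to represent the generator of $H^2(\mathcal A_\theta^{alg},\mathcal A_\theta^{alg\ast})\cong\mathbb C$ by the fundamental Hochschild $2$-cocycle
\[
c(a_0,a_1,a_2)=\tau\big(a_0(\delta_1a_1\,\delta_2a_2-\delta_2a_1\,\delta_1a_2)\big),
\]
where $\tau$ is the canonical trace and $\delta_1,\delta_2$ are the canonical derivations with $\delta_j(U_k)=\delta_{jk}U_k$. Since $\tau\circ\sigma=\tau$ and $\sigma\delta_j\sigma^{-1}=-\delta_j$ for $j=1,2$, direct substitution gives $\sigma^{\ast}c=(-1)^2c=c$: the two sign changes coming from $\delta_1$ and $\delta_2$ cancel. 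This is exactly the statement that $U_j\mapsto U_j^{-1}$ is orientation-preserving on the torus and therefore fixes the top (fundamental) class, in sharp contrast to the twisted sector where no such cancellation occurs. It follows that $\psi$ is $\mathbb Z_2$-invariant and $H^2(\mathcal A_\theta^{alg},\mathcal A_\theta^{alg\ast})^{\mathbb Z_2}\cong\mathbb C$.

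I expect the main obstacle to be the resolution-level bookkeeping confirming that $w$ represents the same class as $\psi$ rather than a nontrivial multiple of it: tracking the powers of $\lambda$ and the reflection of lattice positions produced by $h_2$, $k_2$, $\sigma$ and $\tilde\varphi$ is precisely the delicate point that distinguished the twisted lemma, and naive position-matching is easy to get wrong. The derivation computation above serves as an independent certificate that this bookkeeping must yield the trivial phase, so in writing the argument I would use it to cross-check the cokernel computation, reducing $w$ modulo $\mathrm{Im}(\alpha_2)$ via the image relations $\alpha_2(U_2,0)=(1-\lambda)U_2^2$ and $\alpha_2(0,U_1)=(1-\lambda)U_1^2$ and confirming that the surviving coefficient is exactly $1$.
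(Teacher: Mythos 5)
Your proposal is correct, and its decisive step takes a genuinely different route from the paper. The first half coincides with the paper's: the paper exhibits $\alpha_2(U_2,0)=(1-\lambda)U_2^2$ and $\alpha_2(0,U_1)=(1-\lambda)U_1^2$ and concludes $H^2(\mathcal A_\theta^{alg},\mathcal A_\theta^{alg\ast})\cong\mathbb C$, generated by the functional $\bar{\varphi}_{-1,-1}\colon x\mapsto x_{-1,-1}$; your coefficient formula $(\lambda^n-\lambda)(\varphi_1)_{n,m-1}+(\lambda^m-\lambda)(\varphi_2)_{n-1,m}$ is the complete justification the paper leaves implicit (and your generator ``at $(1,1)$'' is the same functional under the duality pairing). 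Where you diverge is the invariance check. The paper does it entirely at resolution level: it pushes the cocycle through $k_2^{\ast}$, applies the involution, pulls back through $h_2^{\ast}$, and verifies by hand that $\bar{\varphi}_{-1,-1}\bigl(U_1^{-1}U_2^{-1}\cdot(-1\cdot x)\cdot U_2^{-1}U_1^{-1}\bigr)=x_{-1,-1}$, i.e.\ that the surviving phase is exactly $1$. You instead represent the class by Connes' cocycle $c(a_0,a_1,a_2)=\tau\bigl(a_0(\delta_1a_1\,\delta_2a_2-\delta_2a_1\,\delta_1a_2)\bigr)$ and get $\sigma^{\ast}c=c$ on the nose from $\tau\circ\sigma=\tau$ and $\sigma\delta_j\sigma^{-1}=-\delta_j$. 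Your argument buys robustness exactly where the resolution-level computation is fragile: a stray factor of $\lambda$ in the comparison-map identities would flip the conclusion, and the paper itself is not consistent on these phases (its $k_2$ identity appears with $U_1^{-1}U_2^{-1}\otimes U_1^{-1}U_2^{-1}$ in the twisted lemma but with $U_1^{-1}U_2^{-1}\otimes U_2^{-1}U_1^{-1}$ here, which differ by $\lambda$); it also explains conceptually why the untwisted sector survives ($(-1)^2=1$, orientation preserved) while the twisted sector dies. What the paper's route buys is self-containedness: it never needs to know that $c$ represents a nonzero class. That is the one point you must make explicit for your argument to stand alone, since invariance of a coboundary proves nothing: it is a one-line Koszul pullback, which you only gesture at. Concretely, $c(x,U_1,U_2)=\tau(xU_1U_2)$ and $c(x,U_2,U_1)=-\lambda\,\tau(xU_1U_2)$, so the pullback of $c$ along the paper's $h_2$ is $c(x,U_2,U_1)-\lambda c(x,U_1,U_2)=-2\lambda\,\tau(xU_1U_2)$, a nonzero multiple of $x\mapsto x_{-1,-1}$, i.e.\ of the paper's generator of $\mathcal A_\theta^{alg\ast}/\mathrm{Im}(\alpha_2)$. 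With that line inserted, your proof is complete and, in my view, cleaner than the original.
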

\begin{proof}
We see from the calculations as in \cite{C} that $H^2(\mathcal A_\theta^{alg},\mathcal A_\theta^{alg\ast})^{\mathbb Z_2}= \mathcal A_\theta^{alg \ast} / Im(\alpha_2)$.
Since $\alpha_2(U_2,0) = (1-{\lambda})(U_2^2)$ and $\alpha_2(0,U_1) = (1-{\lambda})( U_1^2)$, we have $H^2(\mathcal A_\theta^{alg},\mathcal A_\theta^{alg\ast}) \cong \mathbb C$ and is generated by the cocycle equivalent to ${\varphi_{-1,-1}}$. Let $\widetilde{\varphi}_{-1,-1}$ be the corresponding element in the Connes complex.
For $\varphi \in \mathcal A_\theta^{alg \ast}$ to be $\mathbb Z_2$ invariant, we need to check that
\begin{center}
$\widetilde{\varphi}(x_0)=  \widetilde{\varphi}(k_2(I \otimes U_2^{-1}\otimes U_1^{-1}))(-1\cdot x_0)-\lambda \widetilde{\varphi}(k_2(I \otimes U_1^{-1}\otimes U_2^{-1}))(-1\cdot x_0)=\widetilde{\varphi}(U_1^{-1}U_2^{-1} \otimes U_2^{-1}U_1^{-1})(-1\cdot x_0) = \varphi(U_1^{-1}U_2^{-1}\cdot (-1 \cdot x_0)\cdot U_2^{-1}U_1^{-1})$.
\end{center}
In the above, $\widetilde{\varphi}$ is the element corresponding to $\varphi$ in the Connes complex. Considering the cocycle $\widetilde{\varphi}_{-1,-1} \in H^2(\mathcal A_\theta^{alg},\mathcal A_\theta^{alg\ast})$, we see that
$\widetilde{\varphi}_{-1,-1}(x) = x_{-1,-1}$ and $\widetilde{\varphi}(U_1^{-1}U_2^{-1}\cdot (-1 \cdot x)\cdot U_2^{-1}U_1^{-1})=x_{-1,-1}$. Hence we conclude that $\widetilde{\varphi}_{-1,-1}$ is invariant under the $\mathbb Z_2$ action.
\end{proof}

\begin{lemma}
$H^1(\mathcal A_\theta^{alg},\mathcal A_\theta^{alg\ast})^{\mathbb Z_2} =0$.
\end{lemma}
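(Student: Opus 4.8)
The plan is to run the same Koszul computation as for the twisted module, but now with the ordinary bimodule $\mathcal A_\theta^{alg \ast}$, and then to read off the $\mathbb Z_2$-invariants. Writing $\alpha_1(\varphi) = (U_1\varphi - \varphi U_1,\, U_2\varphi - \varphi U_2)$ and $\alpha_2(\varphi_1,\varphi_2) = U_2\varphi_1 - \lambda\varphi_1 U_2 - \lambda U_1\varphi_2 + \varphi_2 U_1$, a coefficient computation gives $\varphi^1_{n,m} = (1-\lambda^m)\varphi_{n-1,m}$ and $\varphi^2_{n,m} = (\lambda^n-1)\varphi_{n,m-1}$ for $\alpha_1$, while $\alpha_2(\varphi_1,\varphi_2)_{n,m} = (\lambda^n - \lambda)(\varphi_1)_{n,m-1} + (\lambda^m-\lambda)(\varphi_2)_{n-1,m}$. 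Since $\theta\notin\mathbb Q$, the only indices killing these factors are $m=0$ (resp. $n=0$) for $\alpha_1$ and $n=1$ (resp. $m=1$) for $\alpha_2$; this is the sole structural difference from the twisted case and it is what produces a nonzero cohomology here.

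First I would determine $H^1(\mathcal A_\theta^{alg},\mathcal A_\theta^{alg\ast}) = \ker\alpha_2/\mathrm{im}\,\alpha_1$ before taking invariants. Running the line-by-line reduction used to prove $H^1(\mathcal A_\theta^{alg},{}_{-1}\mathcal A_\theta^{alg\ast})=0$ almost verbatim, an arbitrary cocycle $(\varphi_1,\varphi_2)\in\ker\alpha_2$ can be trivialised by a $\gamma$ with $\gamma_{n-1,m} = \varphi^1_{n,m}/(1-\lambda^m)$ and $\gamma_{n,m-1} = \varphi^2_{n,m}/(\lambda^n-1)$ wherever the denominators are nonzero, the two recipes being compatible on their overlap by the kernel relation and convergence being a non-issue in the algebraic dual. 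The construction breaks down only at the two lattice sites forced by the vanishing factors, namely $(\varphi_1)_{1,0}$ and $(\varphi_2)_{0,1}$, which the kernel relations leave unconstrained and which no coboundary can reach. Hence $H^1(\mathcal A_\theta^{alg},\mathcal A_\theta^{alg\ast}) \cong \mathbb C^2$, with generators the classes $c_1$ of $(U_1,0)$ and $c_2$ of $(0,U_2)$.

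It then remains to compute the induced $\mathbb Z_2$-action on $\mathbb C c_1\oplus\mathbb C c_2$ and to show it has no nonzero fixed vector. As in the $H^2$ computations, the Koszul resolution is not $\mathbb Z_2$-equivariant, so I would transfer the action through the bar complex: pull $c_j$ back to a Hochschild $1$-cochain $\psi(x_0,x_1) = \tilde{\varphi}(k_1(I\otimes x_1))(x_0)$, apply $-1\cdot\psi(x_0,x_1) = \psi(-1\cdot x_0,-1\cdot x_1)$, and push the result back onto the Koszul complex by $h_1$, evaluating the two slots on $U_1$ and $U_2$. Because the involution sends $U_j\mapsto U_j^{-1}$, it carries the support of each generator off its exceptional site, so one must re-expand $k_1(I\otimes U_j^{-1})$ and use the homotopy to land back on a genuine cocycle, whose class is then recorded through its $(1,0)$- and $(0,1)$-coefficients.

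The expected outcome is that this transfer sends $c_1\mapsto -c_1$ and $c_2\mapsto -c_2$, so the only invariant class is $0$. This can be cross-checked against the derivation picture, where $c_1,c_2$ correspond to the canonical derivations $D_j(U_k)=\delta_{jk}U_k$, for which a direct computation gives $-1\cdot D_j = -D_j$; since the trace is $\mathbb Z_2$-invariant, the identification $\mathcal A_\theta^{alg}\cong\mathcal A_\theta^{alg\ast}$ it induces is equivariant and carries this sign over to the dual coefficients. We then conclude $H^1(\mathcal A_\theta^{alg},\mathcal A_\theta^{alg\ast})^{\mathbb Z_2}=0$, which is consistent with Theorem 1.1 once combined with the vanishing of the twisted $H^1$, and also with the invariant Euler count coming from $H^0\cong\mathbb C$ and $H^2\cong\mathbb C$. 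The main obstacle is exactly the equivariance transfer of the third paragraph: because $k_1,h_1$ are not $\mathbb Z_2$-equivariant and the involution moves each generator away from the single site on which it lives, the bookkeeping needed to re-express $-1\cdot c_j$ in the basis $\{c_1,c_2\}$ is where the genuine work lies.
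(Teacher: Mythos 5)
Your setup and your computation of the untwisted $H^1$ agree with the paper: the coefficient formulas for $\alpha_1$, the kernel relation $(\lambda^n-\lambda)\varphi^1_{n,m-1}=(\lambda-\lambda^m)\varphi^2_{n-1,m}$, and the trivialising recipe $\gamma_{n-1,m}=(1-\lambda^m)^{-1}\varphi^1_{n,m}$ are exactly what the paper uses, and your generators $(U_1,0)$, $(0,U_2)$ are the paper's $\tilde{\varphi^1_{-1,0}}$, $\tilde{\varphi^2_{0,-1}}$ written as series via the trace pairing (note $\tau(U_1x)=x_{-1,0}$). The problem is that you stop exactly where the lemma has to produce its content: you write that the ``expected outcome'' of the transfer is $c_1\mapsto -c_1$, $c_2\mapsto -c_2$, and then concede that re-expressing $-1\cdot c_j$ in the basis $\{c_1,c_2\}$ is ``where the genuine work lies.'' That re-expression \emph{is} the proof; without it you have a correct plan, not an argument. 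The paper closes this step with one explicit input from its comparison maps, namely $k_1(I\otimes U_1^{-1})=-(U_1^{-1}\otimes U_1^{-1})$, after which the transfer is a two-line evaluation: $w_1(x)={}_{-1}\psi(x,U_1)=\psi(-1\cdot x,U_1^{-1})=-\varphi^1_{-1,0}\bigl(U_1^{-1}\cdot(-1\cdot x)\cdot U_1^{-1}\bigr)=-x_{-1,0}$, and similarly for $w_2$; hence the action is $-\mathrm{id}$ on both generators and the invariants vanish. You correctly anticipate needing such a homotopy formula but never supply or derive it.

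Your derivation ``cross-check'' is actually the more promising way to fill the hole, but as stated it contains a false assertion: the trace does not induce an identification $\mathcal A_\theta^{alg}\cong\mathcal A_\theta^{alg\ast}$ (the dual consists of arbitrary, not finitely supported, series). What is true, and suffices, is that $\iota(a)=\tau(a\,\cdot)$ is an injective, $\mathbb Z_2$-equivariant bimodule map $\mathcal A_\theta^{alg}\to\mathcal A_\theta^{alg\ast}$, which in the series picture is just the inclusion of finitely supported series; consequently $\iota_*$ carries the derivation classes $[D_j]\in H^1(\mathcal A_\theta^{alg},\mathcal A_\theta^{alg})$ to your classes $[c_j]$, which generate the target. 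Since the involution $\sigma$ satisfies $\sigma D_j\sigma=-D_j$, equivariance of $\iota_*$ gives $-1\cdot[c_j]=-[c_j]$, and the invariant part is zero. If you make that chain explicit you obtain a complete proof whose key step is genuinely different from (and cleaner than) the paper's direct $h_1^*,k_1^*$ computation; as submitted, however, the main route is unexecuted and the fallback is misstated, so the lemma is not yet proved.
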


\begin{proof}
We recall from \cite{C} that:
$$H^1(\mathcal A_\theta^{alg},\mathcal A_\theta^{alg\ast})\cong \mathbb C^2$$
and is generated by ${\varphi^1_{-1,0}}$ and ${\varphi^2_{0,-1}}$. In order to locate the $\mathbb Z_2$ invariant subgroup of $H^1(\mathcal A_\theta^{alg},\mathcal A_\theta^{alg\ast})$, we use the chain homotopy maps $h_1$ and $k_1$. For $a,b \in \mathbb C$, we consider the cocycle $\chi \coloneqq (a\varphi^1_{-1,0},b\varphi^2_{0,-1}) \in \mathcal A_\theta^{alg \ast} \oplus \mathcal A_\theta^{alg \ast}$ in the Connes complex, and let  $\widetilde{\chi} \coloneqq (a\widetilde{\varphi^1}_{-1,0},b\widetilde{\varphi^2}_{0,-1}) \in \text{Hom}_{\mathcal B_\theta^{alg}}(J_1, \mathcal A_\theta^{alg})$ be the corresponding cocycle in the bar complex. It satisfyies the following relation:
$$\widetilde{\varphi^1}_{-1,0}(a \otimes b \otimes  e_1)(x)=\varphi^1_{-1,0}(bxa),  \text{ for }a,b,x \in \mathcal A_\theta^{alg \ast}.$$
Let $\psi = k_1^\ast(\widetilde{\chi})= k_1^\ast (a\widetilde{\varphi^1}_{-1,0},b\widetilde{\varphi^2}_{0,-1})=(a\widetilde{\varphi^1}_{-1,0},b\widetilde{\varphi^2}_{0,-1}) \circ k_1$ be the pushforward of $\widetilde{\chi}$. We have the following explicit description of $\psi$:
$$\psi(x_0,x_1) = (a\widetilde{\varphi^1}_{-1,0},b\widetilde{\varphi^2}_{0,-1})(k_1(I \otimes x_1))(x_0),  \text{ for }x_0,x_1 \in \mathcal A_\theta^{alg \ast}.$$
After the $\mathbb Z_2$ action $\psi$ is transformed to ${}_{-1}\psi(x_0,x_1) \coloneqq \psi(-1 \cdot x_0, -1 \cdot x_1)$. We now pullback ${}_{-1}\psi$ on to the Connes complex to compare with the cocycle $\chi$. The pullback $w\coloneqq(w_1,w_2)$ can be described as follows:
$$(w_1,w_2) = h_1^\ast ({}_{-1}\psi),\text{ where }w_i(x) \coloneqq {}_{-1}\psi(x,U_i).$$
We observe that $w_1(x) = {}_{-1}\psi(x,U_1)=\psi(-1 \cdot x,U_1^{-1})= \widetilde{a\varphi^1_{-1,0}}(k_1(I \otimes U_1^{-1}))(-1 \cdot x).$
We know from our computations \cite[Proof of Theorem 4.1]{Q} that $k_1(I \otimes U_1^{-1}) = -(U_1^{-1} \otimes U_1^{-1})$, using this we have:
\begin{center}
$ \widetilde{a\varphi^1_{-1,0}}(k_1(I \otimes U_1^{-1}))(-1 \cdot x) = -\widetilde{\varphi^1}_{-1,0}(U_1^{-1} \otimes U_1^{-1})(-1 \cdot x)= -\varphi^1_{-1,0}(U_1^{-1} \otimes U_1^{-1})(-1 \cdot x) = -\varphi^1_{-1,0}(U_1^{-1} \cdot (-1 \cdot x) \cdot U_1^{-1}) = -x_{-1,0}$.
\end{center}
Similarly, we can calculate $w_2$ and hence we finally conclude that
$$ h_1^\ast (-1 \cdot ( k_1^\ast(a\widetilde{\varphi^1}_{-1,0},b\widetilde{\varphi^2}_{0,-1}))) = -(a\widetilde{\varphi^1}_{-1,0},b\widetilde{\varphi^2}_{0,-1}).$$
Hence $\chi \notin H^1(\mathcal A_\theta^{alg}, \mathcal A_\theta^{alg \ast})^{\mathbb Z_2}$.
\end{proof}

\begin{proof} [Proof of Theorem \ref{thm:tell}] We know that the cohomology group $H^0(\mathcal A_\theta^{alg}, {}_{-1}\mathcal A_\theta^{alg\ast})^{\mathbb Z_2} \cong \mathbb C^4$ and the group $H^0(\mathcal {A}_{\theta}^{alg}, \mathcal A_\theta^{alg\ast})^{\mathbb Z_2} \cong \mathbb C$. Hence, we conclude that $H^0(\mathcal A_\theta^{alg} \rtimes \mathbb Z_2, (\mathcal A_\theta^{alg} \rtimes \mathbb Z_2)^\ast) \cong \mathbb C^{5}$.\par
We also notice that, $H^1(\mathcal A_\theta^{alg} \rtimes \mathbb Z_2, (\mathcal A_\theta^{alg} \rtimes \mathbb Z_2)^\ast)  = H^1(\mathcal A_\theta^{alg}, \mathcal A_\theta^{alg \ast})^{\mathbb Z_2} \displaystyle \oplus H^1(\mathcal A_\theta^{alg}, {}_{-1}\mathcal A_\theta^{alg \ast})^{\mathbb Z_2}\cong 0$ is clear as each of these summands is zero. As for the second Hochschild cohomology group $H^2(\mathcal A_\theta^{alg} \rtimes \mathbb Z_2, (\mathcal A_\theta^{alg} \rtimes \mathbb Z_2)^\ast)$, we observe that $H^2(\mathcal A_\theta^{alg}, \mathcal A_\theta^{alg \ast})^{\mathbb Z_2} \cong \mathbb C$ and $H^2(\mathcal A_\theta^{alg}, {}_{-1}\mathcal A_\theta^{alg \ast})^{\mathbb Z_2} =0$. Hence, we finally conclude that 
\begin{center}
 
$H^2(\mathcal A_\theta^{alg} \rtimes \mathbb Z_2, (\mathcal A_\theta^{alg} \rtimes \mathbb Z_2)^\ast)^{\mathbb Z_2} \cong \mathbb C$.
\end{center}

\end{proof}

       \section{Cyclic cohomology of $\mathcal A_\theta^{alg} \rtimes \mathbb Z_2$ }

\begin{thm} \label{thm:int}
For the algebraic noncommutative toroidal orbifold $\mathcal A_\theta^{alg} \rtimes \mathbb Z_2$, we have,
\begin{center}
$HC^0(\mathcal A_\theta^{alg} \rtimes \mathbb Z_2) \cong \mathbb C^{5}; HC^1(\mathcal A_\theta^{alg} \rtimes \mathbb Z_2) \cong 0; \newline HC^2(\mathcal A_\theta^{alg} \rtimes \mathbb Z_2) \cong \mathbb C^{6}$.
\end{center}
\end{thm}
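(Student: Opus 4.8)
The plan is to pass from the Hochschild cohomology computed above to cyclic cohomology by running Connes' periodicity ($SBI$) long exact sequence, doing so inside each of the two $\mathbb Z_2$-invariant sectors and then adding. As in [Q], the paracyclic decomposition used for Hochschild cohomology persists at the cyclic level, so
\begin{center}
$HC^\bullet(\mathcal A_\theta^{alg} \rtimes \mathbb Z_2) \cong HC^\bullet(\mathcal A_\theta^{alg}, \mathcal A_\theta^{alg \ast})^{\mathbb Z_2} \oplus HC^\bullet(\mathcal A_\theta^{alg}, {}_{-1}\mathcal A_\theta^{alg \ast})^{\mathbb Z_2}$,
\end{center}
and it is enough to compute each summand in degrees $0,1,2$. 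The Hochschild input, sector by sector, is $(H^0,H^1,H^2) = (\mathbb C,0,\mathbb C)$ for the untwisted coefficients and $(H^0,H^1,H^2) = (\mathbb C^4,0,0)$ for the twisted coefficients.

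In degree $0$ there is nothing to prove, since $HC^0 = H^0$; the two sectors contribute $\mathbb C$ and $\mathbb C^4$, so $HC^0(\mathcal A_\theta^{alg} \rtimes \mathbb Z_2) \cong \mathbb C^5$. In degree $1$ the relevant part of the periodicity sequence in each sector is $0 = HC^{-1} \xrightarrow{S} HC^1 \xrightarrow{I} H^1$, so $HC^1$ injects into $H^1 = 0$; hence both sectors vanish and $HC^1(\mathcal A_\theta^{alg} \rtimes \mathbb Z_2) \cong 0$.

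Degree $2$ carries the content. In each sector I would use the segment
\begin{center}
$H^1 \xrightarrow{B} HC^0 \xrightarrow{S} HC^2 \xrightarrow{I} H^2 \xrightarrow{B} HC^1$
\end{center}
of Connes' sequence, in which the outer Hochschild degree-$1$ groups and the degree-$1$ cyclic group all vanish. For the untwisted coefficients this collapses to $0 \to \mathbb C \xrightarrow{S} HC^2 \xrightarrow{I} \mathbb C \to 0$, so that sector contributes $\mathbb C^2$. For the twisted coefficients $H^2 = 0$, so it collapses to $0 \to \mathbb C^4 \xrightarrow{S} HC^2 \to 0$, giving $\mathbb C^4$, in agreement with the computation $HC^2(\mathcal A_\theta^{alg}, {}_{-1}\mathcal A_\theta^{alg\ast}) \cong \mathbb C^4$ recorded just above. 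Summing the two sectors gives $HC^2(\mathcal A_\theta^{alg} \rtimes \mathbb Z_2) \cong \mathbb C^2 \oplus \mathbb C^4 \cong \mathbb C^6$.

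The genuine points to secure, as opposed to the elementary dimension counts, are two. First, one must justify that the sector splitting is compatible with $S$, $B$ and $I$, so that the periodicity sequence may be applied inside each $\mathbb Z_2$-invariant summand; this is the cyclic refinement of the decomposition taken from [Q]. Second, exactness must survive the passage to $\mathbb Z_2$-invariants, which it does because $2$ is invertible in $\mathbb C$, so averaging exhibits the invariants as a direct summand and ``take $\mathbb Z_2$-invariants'' is an exact functor. With these in hand every sequence above is a short exact sequence of $\mathbb C$-vector spaces, so the dimensions simply add and no splitting of extensions is required; the three stated isomorphisms follow.
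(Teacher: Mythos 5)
Your proposal is correct and follows essentially the same route as the paper: decompose into the two $\mathbb Z_2$-invariant sectors via the crossed-product (paracyclic) splitting, feed in the Hochschild computations $(\mathbb C,0,\mathbb C)$ and $(\mathbb C^4,0,0)$, and run Connes' $SBI$ sequence in each sector to get $\mathbb C^2$ and $\mathbb C^4$ in degree $2$, hence $\mathbb C^5$, $0$, $\mathbb C^6$. The only difference is that you make explicit two points the paper leaves tacit --- compatibility of the sector splitting with $S$, $B$, $I$, and exactness of taking $\mathbb Z_2$-invariants in characteristic zero --- which strengthens rather than changes the argument.
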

\begin{proof} 
We consider the $S,B,I$ sequence for cohomology exact sequence.
\begin{center}
$\dots \rightarrow H^1(\mathcal A_\theta^{alg},{}_{-1}\mathcal A_\theta^{alg \ast})^{\mathbb Z_2} \xrightarrow{B} HC^0(\mathcal A_\theta^{alg},{}_{-1}\mathcal A_\theta^{alg \ast})^{\mathbb Z_2} \xrightarrow{I} HC^2(\mathcal A_\theta^{alg},{}_{-1}\mathcal A_\theta^{alg \ast})^{\mathbb Z_2} \xrightarrow{S} H^2(\mathcal A_\theta^{alg}, {}_{-1}\mathcal A_\theta^{alg \ast})^{\mathbb Z_2} \xrightarrow{B}  HC^1(\mathcal A_\theta^{alg},{}_{-1}\mathcal A_\theta^{alg \ast})^{\mathbb Z_2} \xrightarrow{I} \dots$

\end{center}
Since,  $HC^1(\mathcal A_\theta^{alg},{}_{-1}\mathcal A_\theta^{alg \ast})=H^1(\mathcal A_\theta^{alg},{}_{-1}\mathcal A_\theta^{alg \ast}) = 0$. We get $HC^2(\mathcal A_\theta^{alg},{}_{-1}\mathcal A_\theta^{alg \ast}) \cong \mathbb C^4$.
Since $H^0(\mathcal A_\theta^{alg}, {}_{-1}\mathcal A_\theta^{alg\ast})^{\mathbb Z_2} \cong \mathbb C^4$, while, $H^0(\mathcal A_\theta^{alg}, \mathcal A_\theta^{alg\ast})^{\mathbb Z_2} \cong \mathbb C$, we have,
\begin{center}
$HC^0(\mathcal A_\theta^{alg} \rtimes \mathbb Z_2) \cong \mathbb C^{5}$. 
\end{center} We see that $HC^1(\mathcal A_\theta^{alg},{}_{\pm 1}\mathcal A_\theta^{alg \ast})^{\mathbb Z_2} =0$, and hence we have 
\begin{center}
$ HC^1(\mathcal A_\theta^{alg} \rtimes \mathbb Z_2) \cong 0$.
\end{center}
 Also since $HC^2(\mathcal A_\theta^{alg},\mathcal A_\theta^{alg \ast})^{\mathbb Z_2} \cong \mathbb C^2$ and $HC^2(\mathcal A_\theta^{alg},{}_{-1}\mathcal A_\theta^{alg \ast})^{\mathbb Z_2} \cong \mathbb C^4$, we have 
$$HC^2(\mathcal A_\theta^{alg} \rtimes \mathbb Z_2) \cong \mathbb C^{6}.$$
\end{proof}
Now we can easily compute the periodic cyclic homology of the $\mathcal A_\theta^{alg} \rtimes \mathbb Z_2$.

\begin{proof} [Proof of Theorem \ref{thm:cneat}]
From the modified Connes complex we have $H^\bullet(\mathcal A_\theta^{alg} \rtimes \mathbb Z_2, (\mathcal A_\theta^{alg} \rtimes \mathbb Z_2)^\ast) = 0$ for $\bullet \geq 3$, and we have the isomorphism $HC^\bullet(\mathcal A_\theta^{alg} \rtimes \mathbb Z_2, (\mathcal A_\theta^{alg} \rtimes \mathbb Z_2)^\ast) \cong HC^{\bullet+2}(\mathcal A_\theta^{alg} \rtimes \mathbb Z_2, (\mathcal A_\theta^{alg} \rtimes \mathbb Z_2)^\ast)$ for $\bullet > 1$. Now, using the results of Theorem \ref{thm:int} we arrive at the desired results:
$$HP^{even}(\mathcal A_\theta^{alg} \rtimes \mathbb Z_2) \cong \mathbb C^6 \text{ and }HP^{odd}(\mathcal A_\theta^{alg} \rtimes \mathbb Z_2) = 0.$$
\end{proof}

\section{Chern-Connes pairing for $\mathcal A_\theta^{alg} \rtimes \mathbb Z_2$}
In this section we calculate the Chern-Connes pairing associated with the toroidal orbifold $\mathcal A_\theta^{alg} \rtimes \mathbb Z_2$. There are six projectionsgenerating $K_0(\mathcal A_\theta \rtimes \mathbb Z_2)$ \cite{ELPH}. Five of them belong to the algebra $\mathcal A_\theta^{alg} \rtimes \mathbb Z_2$ and they are the following: 
\begin{enumerate}
\item[(i)]$[1]$
\item[(ii)]$[p^{\theta}]$, where $p^{\theta}= \displaystyle \frac{1}{2}(1+t)$.
\item[(iii)]$[q_0^{\theta}]$, where $q_0^{\theta}= \displaystyle \frac{1}{2}(1-U_1t)$.
\item[(iv)]$[q_1^{\theta}]$, where $q_1^{\theta}= \displaystyle \frac{1}{2}(1-U_2t)$.
\item[(v)]$[r^{\theta}]$, where $r^{\theta}= \displaystyle \frac{1}{2}(1-{\sqrt \lambda}U_1U_2 t)$,
\end{enumerate}
where $t$ is an unitary satisfying the relations $t^2=1$ and $tU_it^{-1} = U_i^{-1}$ for $1 \leq i \leq 2$.
A complete description of the group $K_0(\mathcal A_\theta^{alg} \rtimes \mathbb Z_2)$ is unknown, with the Chern-Connes pairing of these five generators with the cyclic cocycles we will have some understanding of its noncommutative index theory. We describe pairing table for these projections with the cyclic cocyles of $HP^{even}(\mathcal A_\theta^{alg} \rtimes \mathbb Z_2)$  computed in Theorem \ref{thm:cneat}. Using the fact that $\langle [e], [S\phi] \rangle= \langle [e], [\phi] \rangle$, we have the following computations.
\begin{proof}  [Proof of Theorem \ref{thm:treat}] :\\
\underline{Pairing of $[S\tau]$} \newline
The following are the pairings with the element $[S\tau] \in HP^{even}(\mathcal A_\theta^{alg} \rtimes \mathbb Z_2)$. \\
1. $\langle [1],[\tau] \rangle = 1$ \\
2. $\langle [p^\theta],[\tau] \rangle = \displaystyle \frac{1}{2}$ \\
3. $\langle [q_0^\theta],[\tau] \rangle = \displaystyle \frac{1}{2}$ \\
4. $\langle [q_1^\theta],[\tau] \rangle = \displaystyle \frac{1}{2}$ \\
5. $\langle [r^\theta],[\tau] \rangle = \displaystyle \frac{1}{2}$. \\

\underline{Pairing of $[S\mathcal D_{0,0}]$} \newline
The following are the pairings with the element $[S\mathcal D_{0,0}] \in HP^{even}(\mathcal A_\theta^{alg} \rtimes \mathbb Z_2)$. \\
1. $\langle  [1],[\mathcal D_{0,0}] \rangle = 0$ \\
2. $\langle  [p^\theta],[\mathcal D_{0,0}] \rangle = \displaystyle \frac{1}{2}$ \\
3. $\langle  [q_0^\theta],[\mathcal D_{0,0}] \rangle = 0$ \\
4. $\langle  [q_1^\theta],[\mathcal D_{0,0}] \rangle = 0$ \\
5. $\langle  [r^\theta],[\mathcal D_{0,0}] \rangle = 0$. \\

\underline{Pairing of $[S\mathcal D_{1,0}]$} \newline
The following are the pairings with the element $[S\mathcal D_{1,0}] \in HP^{even}(\mathcal A_\theta^{alg} \rtimes \mathbb Z_2)$. \\
1. $\langle  [1],[\mathcal D_{1,0}] \rangle = 0$ \\
2. $\langle  [p^\theta],[\mathcal D_{1,0}] \rangle = 0$ \\
3. $\langle  [q_0^\theta],[\mathcal D_{1,0}] \rangle = -\displaystyle \frac{1}{2}$ \\
4. $\langle  [q_1^\theta],[\mathcal D_{1,0}] \rangle = 0$ \\
5. $\langle  [r^\theta],[\mathcal D_{1,0}] \rangle = 0$. \\

\underline{Pairing of $[S\mathcal D_{0,1}]$} \newline
The following are the pairings with the element $[S\mathcal D_{0,1}] \in HP^{even}(\mathcal A_\theta^{alg} \rtimes \mathbb Z_2)$. \\
1. $\langle  [1],[\mathcal D_{0,1}] \rangle = 0$ \\
2. $\langle  [p^\theta],[\mathcal D_{0,1}] \rangle = 0$ \\
3. $\langle  [q_0^\theta],[\mathcal D_{0,1}] \rangle = 0$ \\
4. $\langle  [q_1^\theta],[\mathcal D_{0,1}] \rangle = -\displaystyle \frac{1}{2}$ \\
5. $\langle  [r^\theta],[\mathcal D_{0,1}] \rangle = 0$ .\\

\underline{Pairing of $[S\mathcal D_{1,1}]$} \newline
The following are the pairings with the element $[S\mathcal D_{1,1}] \in HP^{even}(\mathcal A_\theta^{alg} \rtimes \mathbb Z_2)$. \\
1. $\langle  [1],[\mathcal D_{1,1}] \rangle = 0$ \\
2. $\langle  [p^\theta],[\mathcal D_{1,1}] \rangle = 0$ \\
3. $\langle  [q_0^\theta],[\mathcal D_{1,1}] \rangle = 0$ \\
4. $\langle  [q_1^\theta],[\mathcal D_{1,1}] \rangle = 0$ \\
5. $\langle  [r^\theta],[\mathcal D_{1,1}] \rangle = -\displaystyle \frac{\sqrt \lambda}{2}$.

\underline{Pairing of $[\varphi]$} \newline
The following are the pairings with the element $[\varphi] \in HP^{even}(\mathcal A_\theta^{alg} \rtimes \mathbb Z_2)$, where $\varphi$ is the even cocycle computed in the paper of A.Connes [C] \\
1. $\langle [1],[\varphi] \rangle = 0$ \\
2. $\langle [p^\theta],[\varphi] \rangle = 0$ \\
3. $\langle [q_0^\theta],[\varphi] \rangle = 0$ \\
4. $\langle [q_1^\theta],[\varphi] \rangle = 0$ \\
5. $\langle [r^\theta],[\varphi] \rangle = 0$. \\

\end{proof}

We observe that since these five projections of the algebraic noncommutative toroidal orbifold $\mathcal A_\theta^{alg} \rtimes {\mathbb Z_2}$ are projections of the smooth orbifold, $\mathcal A_\theta \rtimes \mathbb Z_2$; their linear independence in $K_0(\mathcal A_\theta \rtimes \mathbb Z_2)$ implies that they are linearly independent in $K_0(\mathcal A_\theta^{alg} \rtimes {\mathbb Z_2})$. We conjecture that these five projections span the group $K_0(\mathcal A_\theta^{alg} \rtimes {\mathbb Z_2})$.

\begin{conj}
$K_0(\mathcal A_\theta^{alg} \rtimes \mathbb Z_2) \cong \mathbb Z^5$.
\end{conj}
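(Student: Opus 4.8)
The plan is to pin down $K_0(\mathcal A_\theta^{alg}\rtimes\mathbb Z_2)$ by squeezing it between a lower bound supplied by the Chern--Connes pairing table above and an upper bound supplied by the topological $K$-theory of the ambient $C^\ast$-crossed product $\mathcal A_\theta\rtimes\mathbb Z_2$, whose $K_0$ is the free group $\mathbb Z^6$ by [ELPH]. Concretely, I would first establish $\operatorname{rank}K_0\ge 5$, then establish $\operatorname{rank}K_0\le 5$ together with freeness; the second half is where essentially all the difficulty sits.

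For the lower bound I would read linear independence straight off the pairing table. The five classes $[1],[p^\theta],[q_0^\theta],[q_1^\theta],[r^\theta]$ are paired against the six generators of $HC^{even}(\mathcal A_\theta^{alg}\rtimes\mathbb Z_2)\cong\mathbb C^6$, and after deleting the last column $[\varphi]$ and reordering rows, the remaining $5\times 5$ matrix on the cocycles $S\tau,S\mathcal D_{1,1},S\mathcal D_{0,0},S\mathcal D_{0,1},S\mathcal D_{1,0}$ is triangular with diagonal entries $1,\tfrac12,-\tfrac12,-\tfrac12,-\tfrac\lambda2$. Since $\lambda=e^{2\pi i\theta}\neq 0$, its determinant is nonzero, so the five projection classes are $\mathbb Q$-linearly independent in $K_0\otimes\mathbb Q$. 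As the pairing factors through $K_0$, this exhibits an embedding $\mathbb Z^5\hookrightarrow K_0(\mathcal A_\theta^{alg}\rtimes\mathbb Z_2)$.

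For the upper bound I would compare with the smooth orbifold via the inclusion $\iota\colon\mathcal A_\theta^{alg}\rtimes\mathbb Z_2\hookrightarrow\mathcal A_\theta\rtimes\mathbb Z_2$. As observed at the end of the paper, the five projections are restrictions of genuine projections in the $C^\ast$-algebra, so $\iota_\ast$ carries our $\mathbb Z^5$ onto a rank-$5$ sublattice of $K_0(\mathcal A_\theta\rtimes\mathbb Z_2)\cong\mathbb Z^6$. The crux is to show $\iota_\ast$ is injective with image exactly this rank-$5$ lattice. The natural detecting mechanism is the sixth cocycle: $[\varphi]$ is Connes' degree-two (area) cocycle arising from the invariant part $H^2(\mathcal A_\theta^{alg},\mathcal A_\theta^{alg\ast})^{\mathbb Z_2}\cong\mathbb C$, and the missing sixth generator of $\mathbb Z^6$ is seen precisely by pairing against $[\varphi]$ through a Rieffel-type projection of non-integral trace. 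Since such a projection is smooth but not polynomial, I would argue that every idempotent over $\mathcal A_\theta^{alg}\rtimes\mathbb Z_2$ pairs trivially with $[\varphi]$, i.e.\ $\langle K_0(\mathcal A_\theta^{alg}\rtimes\mathbb Z_2),[\varphi]\rangle=0$; this forces the Chern character, and hence $\iota_\ast$, into the rank-$5$ subgroup. Combined with rational injectivity of the Chern character this gives $\operatorname{rank}K_0\le 5$, and freeness is inherited since a subgroup of the free group $\mathbb Z^6$ is free.

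The main obstacle is exactly the vanishing $\langle K_0,[\varphi]\rangle=0$ and the injectivity of $\iota_\ast$. Because $\mathcal A_\theta^{alg}$ is \emph{not} closed under holomorphic functional calculus, the density-isomorphism theorem does not apply and neither fact comes for free; both must be extracted from a genuine algebraic $K_0$ computation. A workable route exploits that $|\mathbb Z_2|=2$ is invertible in $\mathbb C$: the orthogonal idempotents $\tfrac12(1\pm t)$ split the crossed product, reducing the problem to the $\pm$-eigenalgebras, after which one needs an honest algebraic $K_0$ computation for the quantum-torus building blocks showing that no Rieffel-type (non-integral-trace) class exists polynomially. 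Pushing this eigenspace reduction through, and ruling out exotic algebraic classes of $\mathrm{Nil}$- or torsion-type, is where the real work lies; the pairing and the [ELPH] input only bracket the answer between $\mathbb Z^5$ and $\mathbb Z^6$, and collapsing that gap is the heart of the conjecture.
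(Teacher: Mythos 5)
The statement you are asked to prove is labelled a \emph{conjecture} in the paper, and the paper in fact offers no proof of it: all the author establishes is the lower bound, namely that the five projections $[1],[p^\theta],[q_0^\theta],[q_1^\theta],[r^\theta]$ are linearly independent in $K_0(\mathcal A_\theta^{alg}\rtimes\mathbb Z_2)$, deduced there from their independence in $K_0(\mathcal A_\theta\rtimes\mathbb Z_2)\cong\mathbb Z^6$ via [ELPH]. Your lower-bound argument is sound and even a bit more self-contained than the paper's: the $5\times 5$ submatrix of the pairing table (deleting the $[\varphi]$ column) is indeed triangular after reordering, with nonzero diagonal $1,\tfrac12,-\tfrac12,-\tfrac12,-\tfrac{\lambda}{2}$, and since the Chern--Connes pairing factors through $K_0$ this gives a free rank-$5$ subgroup. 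Up to this point you have reproduced, by a different mechanism, exactly what the paper actually proves.

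Everything beyond that is a genuine gap, and it is the entire content of the conjecture. Your upper bound rests on three unproved claims: (i) that every class in $K_0(\mathcal A_\theta^{alg}\rtimes\mathbb Z_2)$ pairs trivially with Connes' cocycle $[\varphi]$; (ii) that $\iota_\ast$ into the $C^\ast$-crossed product is injective; and (iii) that the pairing with $HC^{even}$ is rationally injective on algebraic $K_0$, with no torsion or $\mathrm{Nil}$-type classes. None of these follows from what you cite. For (i), the observation that Rieffel's projection is smooth but not polynomial only shows that one \emph{particular} class is not obviously in the image; it does not preclude some other polynomial idempotent (possibly in a matrix algebra over the crossed product) pairing nontrivially with $[\varphi]$ --- ruling that out is essentially equivalent to the conjecture itself. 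For (ii) and (iii), since $\mathcal A_\theta^{alg}$ is not closed under holomorphic functional calculus, there is no density or spectral-invariance theorem available, and rational injectivity of the Chern character into periodic cyclic cohomology is a theorem for $C^\ast$- or smooth algebras in favourable cases, not for arbitrary dense $\ast$-subalgebras; you would have to prove it here from scratch. Your own closing paragraph concedes that the eigenspace reduction and the exclusion of exotic classes is ``where the real work lies,'' which is an accurate self-assessment: the proposal brackets $K_0$ between $\mathbb Z^5$ and something mapping to $\mathbb Z^6$, but does not close the gap, so the statement remains, as in the paper, a conjecture.
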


Acknowledgement: I acknowledge the anonymous referee(s), my postdoc mentor V. Muruganandam
and senior colleague Binod Kumar Sahoo for their writing assistance and
corrections of many mathematical typos in the earlier version of this article.

\vspace{2mm}

{\small \noindent{Safdar Quddus},\\ School of Mathematical Sciences,\\ National Institute of Science Education and Research, Bhubaneswar, India.\\
Email: safdar@niser.ac.in.


\end{document}

\end{document}